\newtheorem{theorem}{Theorem}
\newtheorem{proposition}[theorem]{Proposition}
\newtheorem{lemma}[theorem]{Lemma}
\newtheorem{assumption}{Assumption}
\newcommand{\R}{\mathbb{R}}
\begin{document}

\title[Asymptotic Network Independence]{Asymptotic Network Independence and Step-Size for a Distributed Subgradient Method}

\author{Alex Olshevsky}

\maketitle

\begin{abstract}

We consider whether distributed subgradient methods can achieve a linear speedup over a centralized subgradient method. While it might be hoped that distributed network of $n$ nodes that can compute $n$ times more subgradients in parallel compared to a single node might, as a result, be $n$ times faster,  existing bounds for distributed optimization methods are often consistent with a slowdown rather than speedup compared to a single node. 

We show that a  distributed subgradient method has this ``linear speedup'' property when using a class of square-summable-but-not-summable step-sizes which include $1/t^{\beta}$ when $\beta \in (1/2,1)$; for such step-sizes, we show that after a  transient period whose size depends on the spectral gap of the network, the method achieves a performance guarantee that does not depend on the network or the number of nodes. We also show that the same method can fail to have this ``asymptotic network independence'' property under the optimally decaying step-size $1/\sqrt{t}$ and, as a consequence, can fail to provide a linear speedup compared to a single node with $1/\sqrt{t}$ step-size. 
\end{abstract}

%

\section{Introduction}

We consider the standard setting of distributed convex optimization: $f_1(x), \ldots, f_n(x)$ are convex functions from $\R^d$ to $\R$, with node $i$ of the network being the only node which can compute subgradients of the function $f_i(x)$. The goal is to compute a minimizer \begin{equation} \label{eq:mainprob} x^* \in \arg \min_{x \in \Omega} F(x),\end{equation} where $$ F(x) := \frac{1}{n} \sum_{i=1}^n f_i(x),$$ and  $\Omega$ is a closed convex set. The underlying method must be decentralized, relying only on local subgradient computations and peer-to-peer message exchanges in a certain graph $G$. In particular, we will consider the ``standard model'' of distributed optimization where at each step, node $i$ computes a subgradient of its local function, possibly performs a projection step onto the set $\Omega$, and broadcasts a message to its neighbors.  

This problem setup is a natural model for machine learning over a network of processors. Minimizing the function $F(x)$ typically comes from empirical loss minimization. The function $F(x)$ typicaly measures how well a model parametrized by the vector $x$ can fit a collection of data points; distributing the data points among $n$ processors will result in the problem formulation of Eq. (\ref{eq:mainprob}). 

A variation on this setup considers the situation when the underlying graph $G$ is taken to be the star graph, sometimes called ``local gradient descent'' (see \cite{stich2019local}). The advantage of the using the star graph is that one can design simple protocols involving rounds of interaction between the center and the leaf nodes which are not available in the setting where $G$ is an arbitrary graph. However, a disadvantage of using the star graph is that, as the number of nodes gets large, the number of bits that need to be transmitted to the center increases as well (see \cite{} which consider gradient compression to overcome this). One way to avoid this problem is to consider optimization over arbitrary graphs $G$ instead, as we do in this paper. 

This problem formulation is now classical; it first analyzed in \cite{nedic2009distributed}, where a distributed subgradient method was proposed for the unconstrained case when $\Omega=\R^n$. The case with the constraint $\Omega$ was first analyzed in \cite{nedic2010constrained}. Both papers proposed methods inspired by the ``average consensus'' literature, where nodes mix subgradient steps on their local functions with linear combinations of their neighbors iterates. 

Distributed optimization methods have attracted considerable attention since the publication of \cite{nedic2009distributed} for several reasons. First, it is hoped that distributed empirical loss minimization in machine learning could result in faster training. Second, many problems in control and signal processing among network of nodes involve  nodes acting to maximize a global objective from local information, and Eq. (\ref{eq:mainprob}) is thought to be among the simplest problems of this type. Over the past decade, thousands of  papers have been written on different variations of this problem, and it would be impossible to survey all this related work; instead, we refer the reader to the recent survey \cite{nedic2018network}. 

We next launch into a discussion of the main motivating concern of this paper, namely how the performance of distributed optimization methods compares to their centralized counterparts. We begin by discussing the available guarantees for the (centralized) subgradient method, so that we can can contrast those guarantees to the available distributed bounds in our survey of previous work, which will follow.

\subsection{The subgradient method} 

The projected subgradient method run on the function $F(x)$  takes the form
 \[ y(t+1) = P_{\Omega} \left[ y(t) - \alpha(t) g_F(t) \right], \] where $g_F(t)$ is a subgradient of the function $F(\cdot)$ at $y(t)$, and $P_{\Omega}$ is the projection onto $\Omega$. 
 
 The standard reference for an analysis of this method is the set of lecture notes \cite{boyd2003subgradient}. It is usually assumed that $||g_F(t)||_2 \leq L$ for all $t$, i.e., all subgradients are bounded; and $\Omega$ is assumed to have diameter at most $D$.  The function $F(x)$ may have more than one minimizer over $\Omega$; we select one minimizer arbitrarily and call it $x^*$.

The step-size $\alpha(t)$ needs to be properly chosen. There are two choices that are typically analyzed in this setting. One is to set $\alpha(t)=1/\sqrt{t}$, which turns out to be the optimal decay rate. The other is to choose $\alpha(t)$ to be ``square summable but not summable'' as in the following assumption.

\begin{assumption} \label{ass:sum} The sequence $\alpha(t)$ satisfies 
\begin{eqnarray*} \sum_{t=1}^{+\infty} \alpha^2(t) & < &  \infty \\ \sum_{t=1}^{+\infty} \alpha(t) & = & +\infty 
\end{eqnarray*} 
\end{assumption} 

We now briefly summarize the standard analysis of the method from \cite{boyd2003subgradient}, which the reader can consult for details. The analysis is based on the following recurrence relation, to the effect that, up to second order terms, the method gets closer to the set of minimizers at every step:  \begin{small}
\[ ||y(k+1) - x^*||_2^2 \leq   ||y(k) - x^*||_2^2 - 2 \alpha(k) (F(y(k)) - F^*) \nonumber  + L^2 \alpha^2(k), \]  \end{small} 
It is standard to re-arrange this into a telescoping sum  as
\begin{align}  2 \alpha(k) (F(y(k)) - F^*) \leq &   ||y(k) - x^*||_2^2 - ||y(k+1) - x^*||_2^2 + L^2 \alpha^2(k), \label{eq:recurr} 
\end{align} and then sum it up over $k=1, \ldots, t$. Indeed,  defining 
\[ y_{\alpha}(t) :=\frac{ \sum_{k=1}^t \alpha(k) y(k)}{\sum_{k=1}^t \alpha(k)},
\] and summing up Eq. (\ref{eq:recurr}) and  then appealing to the convexity of $F(x)$ we can obtain that 
\begin{equation} \label{eq:firstsubbound} F(y_{\alpha}(t)) - F^* 
\leq \frac{D^2 + L^2 \sum_{k=1}^t \alpha^2(k)}{2\sum_{k=1}^t \alpha(k) }, 
\end{equation} where $||y(0)-x^*||_2^2 \leq D^2$ as  $\Omega$ was assumed to have diameter $D$. Finally, by Assumption \ref{ass:sum}, the right-hand side goes to zero, and so we obtain that the subgradient method works. 

A variation on this argument can get rid of the dependence on $L$ in Eq. (\ref{eq:firstsubbound}). This requires the following assumption. 

\begin{assumption} \label{ass:firstc} 
There is a constant $C_{\alpha}$ such that for all positive integers $t$,
\[ \sum_{k=1}^{t} \alpha(k) \leq C_{\alpha} \sum_{k=\lceil t/2 \rceil}^t \alpha(k). \] 
\end{assumption} 
This assumption can be motivated by observing that it is satisfied by step-sizes that decay polynomially as $\alpha(t)=1/t^{\beta}$ when $\beta>0$. 

With this assumption in place, one can set $t' = \lceil t/2 \rceil$ and instead sum  Eq. (\ref{eq:recurr}) from $t'$ to $t$. Defining the running average from time $t'$ to $t$ as
\[ y_{\alpha}'(t) :=\frac{ \sum_{k=t'}^t \alpha(k) y(k)}{\sum_{k=t'}^t \alpha(k)}
\] this immediately yields the following proposition. 

\begin{proposition} \label{prop:centsub} Suppose Assumptions \ref{ass:sum} and \ref{ass:firstc} on the step-size are satisfied, $F(x)$ is a convex functions whose subgradients are upper bounded by $L$ in the Euclidean norm, and   $t$ is large enough so that we have the upper bound
\begin{equation} \label{eq:tlower1} \sum_{k = \lfloor t/2 \rfloor}^{+\infty} \alpha^2(k) \leq  \frac{D^2}{L^2}. 
\end{equation} Then 
\[ F(y_{\alpha}'(t)) - F^* \leq 
 \frac{D^2 C_{\alpha}}{\sum_{k=1}^t \alpha(k)}.
\]  \label{prop:subgr}
\end{proposition}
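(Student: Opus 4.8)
The plan is to follow exactly the telescoping recipe already set up in the excerpt, but summing the basic recurrence (\ref{eq:recurr}) over the \emph{second half} of the horizon rather than from the start. Setting $t' = \lceil t/2 \rceil$ and summing (\ref{eq:recurr}) over $k = t', \ldots, t$, the distance terms on the right-hand side telescope, leaving
\[ 2 \sum_{k=t'}^t \alpha(k) (F(y(k)) - F^*) \leq \|y(t') - x^*\|_2^2 - \|y(t+1) - x^*\|_2^2 + L^2 \sum_{k=t'}^t \alpha^2(k). \]
First I would discard the nonpositive term $-\|y(t+1) - x^*\|_2^2$ and bound $\|y(t') - x^*\|_2^2 \leq D^2$, which is legitimate because the projection step keeps $y(t') \in \Omega$ and $x^* \in \Omega$, so their separation is at most the diameter $D$. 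This is the point where restarting the sum at $t'$ pays off: we pick up the \emph{current} distance $\|y(t')-x^*\|_2^2$, bounded by $D^2$, rather than accumulating the full tail of squared step-sizes from time $1$.

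Next I would control the residual $L^2 \sum_{k=t'}^t \alpha^2(k)$ using the smallness hypothesis (\ref{eq:tlower1}). Since $t' = \lceil t/2 \rceil \geq \lfloor t/2 \rfloor$, monotone extension of the index range gives
\[ \sum_{k=t'}^t \alpha^2(k) \leq \sum_{k=\lfloor t/2 \rfloor}^{+\infty} \alpha^2(k) \leq \frac{D^2}{L^2}, \]
so that $L^2 \sum_{k=t'}^t \alpha^2(k) \leq D^2$. Combining the two bounds collapses the right-hand side to $2D^2$, yielding
\[ \sum_{k=t'}^t \alpha(k) (F(y(k)) - F^*) \leq D^2. \]

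It then remains to pass from the weighted sum of function values to the value at the weighted average $y_{\alpha}'(t)$ and to rewrite the denominator. Dividing by $\sum_{k=t'}^t \alpha(k)$ and invoking convexity of $F$ (Jensen's inequality, exactly as in the derivation of (\ref{eq:firstsubbound})) gives $F(y_{\alpha}'(t)) - F^* \leq D^2 / \sum_{k=t'}^t \alpha(k)$. Finally, Assumption \ref{ass:firstc} applied with the same $t'$ reads $\sum_{k=1}^t \alpha(k) \leq C_{\alpha} \sum_{k=t'}^t \alpha(k)$, hence $\sum_{k=t'}^t \alpha(k) \geq C_{\alpha}^{-1} \sum_{k=1}^t \alpha(k)$, and substituting this lower bound on the denominator produces the claimed estimate $F(y_{\alpha}'(t)) - F^* \leq D^2 C_{\alpha} / \sum_{k=1}^t \alpha(k)$.

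There is no genuinely hard step here; the argument is a bookkeeping variant of the standard telescoping proof. The only points requiring care are the index alignment between $t' = \lceil t/2 \rceil$ in the sum and $\lfloor t/2 \rfloor$ in hypothesis (\ref{eq:tlower1}) — which works because the ceiling index range sits inside the tail starting at the floor — and the observation that dropping the time-$1$ through time-$(t'-1)$ contributions is precisely what removes the explicit $L$ dependence, since the squared-step tail is now paid for by the diameter budget $D^2/L^2$ rather than appearing in the numerator.
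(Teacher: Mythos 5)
Your proof is correct and follows exactly the route the paper intends (the paper only sketches it as "sum Eq.~(\ref{eq:recurr}) from $t'$ to $t$... this immediately yields the proposition"): telescoping over the second half, bounding $\|y(t')-x^*\|_2^2$ by $D^2$, absorbing $L^2\sum_{k=t'}^t\alpha^2(k)$ into a second $D^2$ via the tail condition (\ref{eq:tlower1}), and converting the denominator with Assumption \ref{ass:firstc}. The constants and the $\lceil t/2\rceil$ versus $\lfloor t/2\rfloor$ index alignment all check out.
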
 
This result has no dependence on $L$, but at the expense of multiplying the dependence on $D$ by the constant $C_{\alpha}$. For example, if $\alpha(t)=1/t^{3/4}$, it is an  exercise to verify that one can take $C_{\alpha}=6$. We note that since the step-size $\alpha^2(t)$ is square summable, Eq. (\ref{eq:tlower1}) is guaranteed to hold for large enough $t$.

The bound of this proposition suggests to take $\alpha(t)$ decaying as slowly as possible (so that $\sum_{k=1}^t \alpha(k)$ grows as fast as possible) while still keeping $\alpha(t)$ square summable but not summable. There is no optimal choice, but in general one wants to pick $\alpha(t)=1/t^{\beta}$ where $\beta$ is close to $1/2$, but not $1/2$ since $\alpha(t)=1/\sqrt{t}$ is not square summable. The result will be a decay rate of $F(y_{\alpha}'(t)) - F^* = O(1/t^{1-\beta})$. 

One can redo the above argument with the rate of decay of $\alpha(t)=1/\sqrt{t}$ to obtain an optimal rate of decay. In that case, because this is not a square summable step-size, the dependence on $L$ cannot be avoided. However, since $\sum_{k=t'}^t 1/t = O(1)$, we can simply repeat all the steps above to give the bound
\begin{equation} \label{eq:normalsub} F(y_{\alpha}(t)) - F^* 
\leq O \left( \frac{D^2 + L^2}{\sqrt{t} } \right), 
\end{equation} One can also choose $\alpha(t)$ depending on the constants $D$ and $L$ to obtain better scaling with respect to those constants; however, in this paper, for simplicity we restrict our attention to unoptimized step-sizes of the form $\alpha(t)=1/t^{\beta}$. 

We next compare these results for the centralized subgradient to available convergence times in the distributed case.

\subsection{Convergence times of distributed subgradient methods} 

A number of distributed subgradient methods have been proposed in the literature, with the simplest being 
\begin{equation} \label{eq:no}  x(t+1) = W x(t) - \alpha(t) g(t), \end{equation} which was analyzed in \cite{nedic2009distributed}. Here $x(t)$ is an $n \times d$ matrix, with the $i$'th row of $x(t)$ being controlled by agent $i$; we will use $x_i(t)$ to denote the same $i$'th row. The matrix $g(t)$ is also  $n \times d$ and it's $i$'th row, which we will denote by $g_i(t)$, is a subgradient of the function $f_i(x)$ at $x=x_i(t)$. The matrix $W$ is doubly stochastic and needs to satisfy some connectivity and non-aperiodicity conditions; it suffices to assume that $W$ has positive diagonal and that the directed graph corresponding to the positive entries of $W$ is strongly connected. 

It was shown in \cite{nedic2009distributed} that, for small enough constant stepsize $\alpha(t)=\alpha$, this method results in a final error that scales linearly in $\alpha$. The projected version 
\begin{equation} \label{eq:nop} x(t+1) = P_{\Omega} \left[ W x(t) - \alpha(t) g'(t) \right], \end{equation}  was studied in \cite{nedic2010constrained}; here the projection operator $P_{\Omega}$ acts on each row of the matrix and $g'(t)$ is composed of subgradients evaluated at $Wx(t)$. It was shown that, under an appropriately decaying step-size, this scheme results convergence to an optimal solution.

Our interest is in the convergence rate of these methods; in particular, we want to see if the parallelization inherent in having $n$ nodes query subgradients at the same time  helps convergence. A useful benchmark is the consider a single node, which knows all the functions $f_i(x), i = 1, \ldots, n$, and can compute the gradient of one of these functions at every time step. We will call the rate obtained in this setup by performing full-batch subgradient descent (i.e., by computing the gradient of $F(x)$ by querying the subgradients of $f_1(x), \ldots, f_n(x)$ in $n$ steps) the {\em single-node rate}. The single node rate consists in multiplying all the rates obtained in the previous section by $n$, consistent with $n$ steps to compute a single subgradient of $F(x)$. For example, the bound of Proposition \ref{prop:subgr} becomes 
 \[ F(y_{\alpha}'(t)) - F^* \leq 
 \frac{n D^2 C_{\alpha}}{\sum_{k=1}^t \alpha(k)}. \] 
Ideally, one hopes for a factor $n$ speedup over the single note rate, since the $n$-node network can compute $n$ subgradients in parallel at every step. This corresponds to a convergence time that removes the factor of $n$ from the last equation.

Most of the existing convergence analyses do not attempt to write out all the scalings for the convergence times of distributed optimization methods; many papers write out the scaling with $t$ but do not focus on scaling with the number of nodes. Unfortunately, once those scalings are traced out within the course of the proof, they tend to scale with $(1-\sigma)^{-1}$, where $\sigma$ is the second-largest singular value associated with the matrix $W$. The quantity $(1-\sigma)^{-1}$ can scale as much as $O(n^2)$ in the worst-case over all graphs (se e\cite{nedic2018network}), so the underlying scaling could actually be worse than the single-node rate. 

A concrete example of this comes from the survey paper \cite{nedic2018network}, where a worse case rate is explicitly written out. The unconstrained case is studied, with step-size $\alpha=1/\sqrt{T}$ and the algorithm is run for $T$ steps. It is shown in \cite{nedic2018network} that 
\begin{equation} \label{eq:prevscaling} F(y_{\alpha}(t)) - F^* \leq O \left( \frac{D^2 + L^2 (1-\sigma)^{-1}}{\sqrt{T}} \right) \end{equation} Comparing this with Eq. (\ref{eq:normalsub}), we see that, in the worst case when $(1-\sigma)^{-1} \approx \Theta(n^2)$, this is a factor of $n$ slower than the single node rate -- in spite of the fact that the network can compute $n$ gradients in parallel. Similar issues affect all the upper bounds in this setting that have been derived in the previous literature, in particular the bounds derived in \cite{duchi2011dual} for dual subgradient,  in \cite{scaman2018optimal} (for the standard setting of distributed optimization where a single message exchange in neighbors is possible per step; \cite{scaman2018optimal} also explored methods where potentially $(1-\sigma)^{-1/2}$ steps of gossip are possible per step, in which case the dependence on $1-\sigma$ for the number of subgradients computed can be removed), and those implicit in  \cite{nedic2010constrained} for square-summable-but-not-summable step-sizes. 

In the paper \cite{olshevsky2017linear}, it was shown how, for a particular way to choose the matrix $W$ in a distributed way, it is possible to replace the $(1-\sigma)^{-1}$ with an $O(n)$ factor, matching the single-node rate. The idea was to use Nesterov acceleration, which allows us replace $(1-\sigma)^{-1}$ with $(1-\sigma)^{-1/2}$, and argue that for a certain particularly chosen set of weights the latter quantity is $O(n)$. However, this required slightly stronger assumptions (namely, knowing either the total number of nodes or a reasonably accurate upper bound on it). A similar idea was explored in \cite{scaman2018optimal}. While this does not offer a speedup over the single-node rate, at least it matches it. 

To attain a linear speedup over the single node rate, we would need to remove $(1-\sigma)^{-1}$ from the numerator in the scaling above. In this paper, we explore when this can be done and when it cannot.

\section{Related work} 

Among methods that converge to the optimal solution, the first examples of distributed algorithms that obtained a linear speedup were \cite{lian2017can} and \cite{morral2017success}. In \cite{lian2017can}, the case of stochastic non-convex gradient descent was considered, and it was shown that when the number of iterations is large enough, the distributed method achieves a linear speedup over the centralized. A similar result was shown in \cite{morral2017success} for the case of strongly convex distributed stochastic gradient descent; specifically, using tools from stochastic approximating, \cite{morral2017success} derived an expression for the limiting variance of the decentralized method which matches the performance of the centralized method. These papers have spawned a number of follow-up works (e.g., \cite{spiridonoff2020robust, koloskova2019decentralized, assran2019stochastic, wang2019slowmo, lian2018asynchronous, tang2018d, jiang2018linear, nazari2019dadam}, among others), making a number of further refinements in terms of the communication topology, communication requirements,  transient times, and testing on real world benchmarks.  In the earlier paper  \cite{towfic2016excess}, a linear speedup was shown under the assumptions that all the functions $f_i(\cdot)$ have the same minimum. The corresponding result in the constant step-size case, where the system converges to a neighborhood of the optimal solution, was shown even earlier in \cite{chen2015learninga, chen2015learning}, where it was proved that the limiting mean-squared error corresponding to any graph is the same as that of the complete graph. More recently, it was shown in \cite{reb2020} that distributed SGD can attain optimal statistical rates for generalization if all functions $f_i(\cdot)$ are sums of the quadratic sampled from the same distribution and the number of total samples per agent is sufficiently large.  In \cite{hendrikx2019accelerated} the case when every node knows a finite sum of functions from which it can sample was considered, and a linear speedup was shown in the case when the network was not too large. 

In contrast to our work, all of these papers studied the case of stochastic gradients, and also made stronger assumptions such as Lipschitz continuity of the gradient or strong convexity. By contrast, we study the usual (i.e., non-stochastic) subgradient method under only the assumption that the function is convex. 

This setup has been analyzed in a number of settings in the control literature, as we already discussed, and without showing any linear speedup. Additional works to be mentioned are a primal-dual approach was explored in \cite{zhu2011distributed}, using the dual subgradient method instead of the ordinary subgradient method was studied in \cite{duchi2011dual}. An analysis that elaborates on scaling with the network was given in \cite{ram2010distributed}. Finally, we mention that our paper is close in spirit to the recent works \cite{neglia2019role, neglia2020decentralized}, which is also concerned with the very same question, and gives bounds that make clear the effect of the graph topology under a number of different scenarios, in particular showcasing when it does not matter.





\subsection{Our contributions}

We will analyze a minor variation of Eq. (\ref{eq:no}) and Eq. (\ref{eq:nop}): 
\begin{equation} \label{eq:mainupdate1} x(t+1) =  W P_{\Omega} \left[  x(t) - \alpha(t) g(t) \right], \end{equation} 
This is slightly more natural than Eq. (\ref{eq:nop}), since $g(t)$ here is the subgradient evaluated at $x(t)$, and not at $Wx(t)$ as in Eq. (\ref{eq:nop}). This makes analysis somewhat neater. 

Our main result will be to show that a linear speedup is achieved by this iteration on a class of square-summable-but-not-summable stepsizes which include  $\alpha(t)=1/t^{\beta}$ with $\beta \in (1/2,1)$. This is done by showing that, provided $t$ is large enough, we can give a performance bound that does not depend on $(1-\sigma)^{-1}$, i.e., is network independent. We will also show that the same assertions fail for the optimally decaying step-size $\alpha(t)=1/\sqrt{t}$. 

We next give a formal statement of our main results. First, let us state our assumptions formally as follows.

\begin{assumption} \label{ass:functions}
Each function $f_i(x): \R^d \rightarrow R$ is a convex with all of its subgradients bounded by $L$ in the Euclidean norm. Moreover, the set $\Omega$ is a closed convex set. Each node begins with an identical initial condition $x_i(0) \in \Omega$.
\end{assumption} 

\begin{assumption} \label{ass:matrix} The matrix $W$ is nonnegative, doubly stochastic, and with positive diagonal. The directed graph corresponding to the positive entries of $W$ is strongly connected. 
\end{assumption} 

Secondly, we will be making an additional assumption on step-size.   

\begin{assumption} The sequence $\alpha(t)$ is nonincreasing and there is a constant $C_{\alpha'}$ such that for all $t$,  \label{ass:secondc}
\[ \alpha(\lfloor t/2 \rfloor) \leq C_{\alpha}' \alpha(t). \]
\end{assumption}  This assumption essentially bounds how fast
$\alpha(t)$ can decrease over the period of $t/2, \ldots, t$.  It is motivated by observing that it holds for step-sizes of the form $\alpha(t)=1/t^{\beta}$.

Finally, let us introduce the notation 
\[ \overline{x}(t) = \frac{1}{n} \sum_{i=1}^n x_i(t),\] for the average of the iterates at time $t$. We adopt a general convention that, for a vector or a matrix, putting an overline will mean referring to the average of the rows. 

Similarly to what was done in the previous subsection, we  define 
\[ x_{\alpha}'(t) =\frac{ \sum_{k=t'}^t \alpha(k) \overline{x}(k)}{\sum_{k=t'}^t \alpha(k)}
\] 

Our first main result is the following theorem.

\begin{theorem}[Asymptotic Network Independence with Square Summable Step-Sizes] Suppose Assumptions \ref{ass:sum}, \ref{ass:firstc}, and \ref{ass:secondc} on the step-size, Assumption \ref{ass:functions} on the functions, and Assumption \ref{ass:matrix} on the mixing matrix $W$ all hold.  \label{thm:main}
 
Then if $t$ is large enough so that the tail sum satisfies the upper bound \begin{equation} \label{eq:alphabound1} \sum_{k=\lfloor t/2 \rfloor}^{+\infty} \alpha^2(t) \leq \frac{ D^2 (1-\sigma)}{10C_{\alpha}' L^2} \end{equation} and also 
\begin{equation} \label{eq:tlower12} t \geq \Omega \left( \frac{1}{1-\sigma} \log  \left[ (1-\sigma) t \alpha_{\rm max}/(C_{\alpha}' \alpha(t)) \right] \right) 
\end{equation} then we have the network-independent bound \begin{equation} \label{eq:firstneti} F(x_{\alpha}'(t)) - F^* \leq \frac{ D^2  C_{\alpha}}{ \sum_{k=0}^t \alpha(k)} \end{equation}
In particular, if $\alpha(t) = 1/t^{\beta}$ where $\beta$ lies in the range $(1/2,1)$, then when $t$ additionally satisfies 
\[ t^{2 \beta-1} \geq \Omega_{\beta} \left( \frac{L^2}{D^2 (1-\sigma)} \right) \] we have the network-independent bound \begin{equation} \label{eq:secondneti} 
F(x_{\alpha}'(t)) - F^* \leq O_{\beta} \left( 
\frac{D^2}{t^{1-\beta}} 
\right),\end{equation} where the subscript of $\beta$ denotes that the constants in the $O(\cdot)$ and $\Omega(\cdot)$ notation depend on $\beta$. 
\end{theorem}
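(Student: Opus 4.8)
The plan is to track two quantities in parallel: the \emph{consensus error} $D(t) := \| x(t) - \mathbf{1}\overline{x}(t) \|_F$, measuring how far the rows $x_i(t)$ spread around their average, and the \emph{optimization potential} $\Phi(t) := \frac{1}{n}\sum_{i=1}^n \| x_i(t) - x^* \|_2^2$. The choice of $\Phi(t)$ — the average of the squared distances, rather than the squared distance $\|\overline{x}(t)-x^*\|_2^2$ of the average — is deliberate and is the crux of obtaining a network-\emph{independent} bound: it avoids the appearance of a variance term $\frac{1}{n}\sum_i\|x_i(t)-\overline{x}(t)\|_2^2$ in the descent recursion, which would otherwise contribute a quantity scaling like $(1-\sigma)^{-2}$ that cannot be absorbed. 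Since each $x_i(t)$ is a convex combination of points $P_\Omega[\,\cdot\,]\in\Omega$ and $x_i(0)\in\Omega$, we have $x_i(t)\in\Omega$ for all $t$, so $\Phi(t)\le D^2$ and $\overline{x}(t)\in\Omega$; moreover $D(0)=0$ by the identical-initialization clause of Assumption~\ref{ass:functions}.

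First I would derive the consensus recursion. Writing $v(t):=P_\Omega[x(t)-\alpha(t)g(t)]$ rowwise, double stochasticity gives $\overline{x}(t+1)=\overline{v}(t)$ and, with $J=\frac{1}{n}\mathbf{1}\mathbf{1}^{\top}$, the identity $x(t+1)-\mathbf{1}\overline{x}(t+1)=(W-J)v(t)$. Assumption~\ref{ass:matrix} yields $\|(W-J)M\|_F\le\sigma\|(I-J)M\|_F$, and comparing each $v_i(t)$ to the common point $P_\Omega[\overline{x}(t)-\alpha(t)\overline{g}(t)]$, using nonexpansiveness of $P_\Omega$ together with $\|(I-J)g(t)\|_F\le\sqrt{n}\,L$, gives $D(t+1)\le\sigma\big(D(t)+\sqrt{n}\,L\,\alpha(t)\big)$. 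Unrolling from $D(0)=0$ produces $D(t)\le\sqrt{n}\,L\sum_{s=0}^{t-1}\sigma^{t-s}\alpha(s)$. The key estimate is then that, for $k\ge t'=\lceil t/2\rceil$, this sum is controlled by the \emph{current} step-size: splitting at $s=\lfloor k/2\rfloor$, the late block is bounded via Assumption~\ref{ass:secondc} ($\alpha(\lfloor k/2\rfloor)\le C_\alpha'\alpha(k)$) by $C_\alpha'\alpha(k)/(1-\sigma)$, while the early block, of size $O(t)\,\sigma^{t/4}\alpha_{\max}$, is forced below $C_\alpha'\alpha(k)/(1-\sigma)$ precisely by the transient condition~(\ref{eq:tlower12}) (after using $-\log\sigma\ge 1-\sigma$). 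This gives $D(k)\le \frac{2C_\alpha'\sqrt{n}\,L\,\alpha(k)}{1-\sigma}$ on the range $k\in[t',t]$.

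Next I would derive the descent recursion for $\Phi$. Using Jensen's inequality rowwise and $\sum_i W_{ij}=1$ gives $\Phi(t+1)\le\frac{1}{n}\sum_j\|v_j(t)-x^*\|_2^2$; expanding the projection bound $\|v_j-x^*\|_2^2\le\|x_j-\alpha(t)g_j-x^*\|_2^2$ and invoking the subgradient inequality for $f_j$ together with its $L$-Lipschitzness (so that $\frac{1}{n}\sum_j f_j(x_j)\ge F(\overline{x})-\frac{L}{\sqrt{n}}D(t)$) yields the recursion
\[ 2\alpha(t)\big(F(\overline{x}(t))-F^*\big)\le \Phi(t)-\Phi(t+1)+\frac{2\alpha(t)L}{\sqrt{n}}D(t)+\alpha^2(t)L^2, \]
in which — and this is the payoff of working with $\Phi$ — no variance term appears.

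Finally I would sum this from $t'$ to $t$, telescoping $\Phi$ and using $\Phi(t')\le D^2$. The $\alpha^2$ term is controlled directly by~(\ref{eq:alphabound1}). The decisive step is the consensus term: substituting $D(k)\le 2C_\alpha'\sqrt{n}\,L\,\alpha(k)/(1-\sigma)$ turns it into $\frac{4C_\alpha'L^2}{1-\sigma}\sum_{k=t'}^t\alpha^2(k)$, and here the factor $(1-\sigma)^{-1}$ cancels against the factor $(1-\sigma)$ in the tail-sum bound~(\ref{eq:alphabound1}), leaving a quantity of order $D^2$. Dividing by $2\sum_{k=t'}^t\alpha(k)$, applying convexity of $F$ to pass to $x_\alpha'(t)$, and converting $\sum_{k=t'}^t\alpha(k)$ into $\sum_{k=1}^t\alpha(k)$ via Assumption~\ref{ass:firstc} yields~(\ref{eq:firstneti}). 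For the polynomial case $\alpha(t)=1/t^\beta$ with $\beta\in(1/2,1)$, I would use $\sum_{k=\lfloor t/2\rfloor}^\infty k^{-2\beta}=\Theta_\beta(t^{1-2\beta})$ to see that~(\ref{eq:alphabound1}) becomes the stated threshold $t^{2\beta-1}\ge\Omega_\beta(L^2/(D^2(1-\sigma)))$, check that this polynomial lower bound on $t$ also subsumes the logarithmic requirement~(\ref{eq:tlower12}), and use $\sum_{k=1}^t k^{-\beta}=\Theta_\beta(t^{1-\beta})$ to read off the rate~(\ref{eq:secondneti}). I expect the genuine difficulty to lie entirely in the two coupled design choices highlighted above — tracking $\Phi$ rather than $\|\overline{x}-x^*\|_2^2$, and the $(1-\sigma)^{-1}$-versus-$(1-\sigma)$ cancellation made legitimate only by the transient estimate on $D(k)$ — with everything else being bookkeeping on constants.
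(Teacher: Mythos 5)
Your proof is correct and reaches the stated bounds by the same overall strategy as the paper --- a consensus estimate $\|x(t)-\mathbf{1}\overline{x}(t)\|_F = O(\sqrt{n}\,L\,\alpha(t)/(1-\sigma))$ valid after the transient (\ref{eq:tlower12}), an approximate descent recursion whose network error is \emph{linear} in the disagreement and hence of order $\alpha^2(t)L^2/(1-\sigma)$, and the absorption of that error into $D^2$ via the tail condition (\ref{eq:alphabound1}) --- but it differs in two implementation choices. First, you track $\Phi(t)=\frac{1}{n}\sum_i\|x_i(t)-x^*\|_2^2$ and handle the projection by Jensen's inequality plus nonexpansiveness against the common point $P_\Omega[\overline{x}(t)-\alpha(t)\overline{g}(t)]$; the paper instead tracks $\|\overline{x}(t)-x^*\|_2^2$ and absorbs the projection into the ``gradient mapping'' $s(t)$, for which it must prove subgradient-like properties (Lemmas \ref{lemm:supper} and \ref{lemm:s}). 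Your route is arguably cleaner in that it needs no analogue of Lemma \ref{lemm:s}. Second, your stated rationale for preferring $\Phi$ --- that tracking $\|\overline{x}-x^*\|_2^2$ would force an unabsorbable variance term of order $(1-\sigma)^{-2}$ --- is only true if one passes to the individual iterates via Jensen; the paper avoids this entirely by using the exact identity $\overline{x}(t+1)=\overline{x}(t)-\alpha(t)\overline{s}(t)$ and splitting the cross term $\overline{s}(t)(\overline{x}(t)-x^*)^T$ into $\frac{1}{n}\sum_i s_i(t)(x_i(t)-x^*)^T$ plus $\frac{1}{n}\sum_i s_i(t)(\overline{x}(t)-x_i(t))^T$, so that only a first-order disagreement term appears there as well. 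Both potentials therefore work equally well; the substantive content --- the consensus bound proportional to the \emph{current} step-size (via Assumption \ref{ass:secondc}) and the cancellation of $(1-\sigma)^{-1}$ against the $(1-\sigma)$ in (\ref{eq:alphabound1}) --- is identical in the two arguments, as is the treatment of the polynomial case.
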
 

At the risk of being repetitive, we note that the performance guaranteed by this theorem is asymptotically network independent, as the only dependence on the spectral gap $1-\sigma$ is in the transient. The point of this theorem is to contrast Eq. (\ref{eq:firstneti}) with Proposition (\ref{prop:centsub}). The two guarantees are identical, which implies that the distributed optimization method analyzed in Theorem \ref{thm:main} gives us a linear time speedup over the single-node rate using the same-step size. Likewise, Eq. (\ref{eq:secondneti}) gives a network-independent bound (though, again, the size of the transient until it holds depends on the network), and can be thought of as a linear-time speedup over the corresponding single-node rate.

Such linear speedups are significant in that they provide a strong motivation for distributed optimization: one can claim that, over a network with $n$ nodes, the distributed optimization is $n$ times faster than a centralized one,  at least provided $t$ is large enough. Without such speed-ups, it is much more challenging to motivate the study of distributed optimization in the first place.

Note that the lower bound of Eq. (\ref{eq:tlower12}) on the transient is not fully explicit, as $t$ actually appears on both sides. However, for large enough $t$ the inequality always holds. Informally, this is because $t$ on the right-hand side is inside the logarithm. Slightly more formally, it is immediate that, if the bound of Eq. (\ref{eq:tlower12}) fails to hold, then $\alpha(t) \leq C_1 t e^{-C_2 t}$ for all $t$ where  $C_1,C_2$ depend on $1-\sigma$,$\alpha_{\rm max}$, and $C_{\alpha}$; and this  would contradict the non-summability of $\alpha(t)$ in Assumption \ref{ass:sum}. 

Finally, note that this theorem contains bounds on the performance achieved by $x_{\alpha}'(t)$, which a particular kind of running average across the iterates of all the nodes in he network. This can be tracked in a distributed way by the nodes if the number of iterations $T$ to be performed is known ahead of time: each node can take a running average of its own iterates and, after $T$ iterations have been performed, the nodes do a run of average consensus to compute the average of these running averages. The algorithm from \cite{olshevsky2017linear} takes $O(n \log (1/\epsilon))$ iterations to come $\epsilon$-close to the average on any undirected $n$-node graph, so this incurs an extra cost which does not depend on $T$ (moroever, that consensus method along with the method from \cite{rebeschini2017accelerated} takes $O(D)$ steps on many common graphs of diameter $D$, e.g., grids). It is also possible to modify this idea to handle  the case when the number of iterations to be done is not known ahead of time (e.g., each node can restart computation of the running average when the number of iterations is a power of $2$) at the cost of losing of a constant multiplicative factor in the error $F(\cdot)-F^*$.

\bigskip

\bigskip

Unfortunately, the previous theorem does not apply to $\beta=1/2$ which, as discussed earlier, is the best rate of decay for the subgradient method. 
In fact, our next theorem will show something quite different occurs when $\beta=1/2$: we will construct a counterexample where the distributed method has network-dependent performance regardless of how large $t$ is. 

We next give a formal statement of this result. Our first step is to describe how we will choose the matrix $W$ depending on the underlying graph. Let us adopt the convention that, given an undirected graph $G=(V,E)$ without self-loops, we will define the symmetric stochastic matrix $W_{G, \epsilon}$ as 
\[ [W_{G,\epsilon}]_{ij} = \begin{cases} \epsilon & (i,j) \in E \\ 0 & \mbox{ else} \end{cases}, \] and we set diagonal entries $[W_G]_{ii}$ to whatever values result in a stochastic matrix. Clearly, $\epsilon$ should be strictly smaller than the largest degree in $G$.

We next define $G_n'$ to be a graph on $2n$ nodes obtained as follows: two complete graphs on nodes $u_1, \ldots, u_n$ and $v_1, \ldots, v_n$ are joined by connecting $u_i$ to $v_i$. We note that because the largest degree in this graph is $n$, any $\epsilon$ used to construct a stochastic $W_{G,\epsilon}$ should be upper bounded by $1/n$. 

Our second main result shows that when we run Eq. (\ref{eq:mainupdate1}) on this graph $G_n'$, then with an appropriate choice of functions we will never obtain a performance independent $\epsilon^{-1}$; and since, as we just, $\epsilon^{-1}$ grows as $\Omega(n)$, the performance will always scale with $n$ no matter how long we wait. 

\begin{theorem}[Lack of Asymptotic Network Independence with $1/\sqrt{t}$ Step-Size] \label{thm:supp} Consider the distributed optimization method of Eq. (\ref{eq:mainupdate1}) with 
\begin{itemize} \item The functions 
\[ f_i(x) = \gamma |x |, \]  when $i \in \{u_1, \ldots, u_n\}$ and  
\[ f_i(x) = \frac{1}{2} |x-1|,\] for   $i \in \{v_1, \ldots, v_n\}$
\item Step-size $\alpha(t) = 1/\sqrt{t}$.
\item Constraint set $\Omega = [-a,a]$.
\item Initial conditions $x_i(0)=0$. \end{itemize}

Then, there exists a choice of the constants $\gamma > 1$ and $a$ (appearing in the above definitions), both  independent of $n$ or $\epsilon$, such that, on the graph $G_n'$ with $n \geq 4$ and any choice of $\epsilon \leq 1/n$,   there exists an infinite sequence $g_i(t)$ such that the quantity $x(t)$ defined through Eq. (\ref{eq:mainupdate1}) satisfies: 
\begin{itemize} 
\item For $i \in \{v_1, \ldots, v_n\}$, $x_i(t) - x^*$ is a nonnegative sequence that does not depend on $i$ and satisfies
\begin{equation} \label{eq:decaylowerbound} x_i - x^* = \Omega \left( \frac{\epsilon^{-1}}{\sqrt{t}} \right), \mbox { for all } i \in \{ v_1, \ldots, v_n\}, \end{equation}  for all large enough $t$. 
\item $x_i(t) = x^*$ for all $i \in \{u_1, \ldots, u_n\}$ and all $t$. 
\end{itemize} 
\end{theorem}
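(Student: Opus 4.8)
The plan is to use the heavy symmetry of $G_n'$ to collapse the $2n$-dimensional dynamics onto a scalar recursion, and then to exploit the freedom in the subgradients of the nonsmooth functions to pin the $u$-nodes exactly at the optimizer while letting the $v$-nodes float up to the level dictated by the balance between the gradient push toward $1$ and the weak consensus pull of strength $\epsilon$. First I would reduce by symmetry: since the two cliques are joined by the perfect matching $u_i \sim v_i$, the initialization is identical, and the functions depend only on node type, I would choose the $g_i(t)$ to respect this symmetry and argue by induction that all $x_{u_i}(t)$ share a common value $u(t)$ and all $x_{v_i}(t)$ a common value $v(t)$. Under this ansatz the action of $W_{G,\epsilon}$ reduces to the scalar mixing $u \mapsto (1-\epsilon)u + \epsilon v$ and $v \mapsto (1-\epsilon)v + \epsilon u$, because each node has $n-1$ same-type clique neighbors and one cross neighbor, all at weight $\epsilon$, with self-weight $1-n\epsilon$. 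I would then record that $F(x) = \tfrac{\gamma}{2}|x| + \tfrac14|x-1|$, so that $\partial F(0) = [-\tfrac{\gamma}{2}-\tfrac14,\ \tfrac{\gamma}{2}-\tfrac14] \ni 0$ precisely because $\gamma > 1 > \tfrac12$; hence $x^*=0$ and $F^*=\tfrac14$.

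Next I would construct the subgradients. Fixing $a \in (0,1)$, say $a=\tfrac12$, I would show inductively that $v(t)\in[0,a]$, so every $v$-node sits strictly below $1$ and its only subgradient is the forced $g_{v_i}(t)=-\tfrac12$; the local step thus sends $v(t)\mapsto q(t):=P_\Omega[v(t)+\alpha(t)/2]$. For the $u$-nodes, whose common value must stay at $0$, I would use that the subdifferential of $\gamma|x|$ at $0$ is all of $[-\gamma,\gamma]$, and set $g_{u_i}(t)$ to the unique value cancelling the upward pull $\epsilon q(t)$ from the matched $v$-node after mixing. Solving $(1-\epsilon)\bigl(-\alpha(t)g_{u_i}(t)\bigr)+\epsilon q(t)=0$ gives $g_{u_i}(t)=\tfrac{\epsilon q(t)}{(1-\epsilon)\alpha(t)}$, and substituting back yields the scalar recursion $v(t+1)=\tfrac{1-2\epsilon}{1-\epsilon}\,q(t)$.

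The heart of the argument is the two-sided analysis of this recursion. Writing $\rho=\tfrac{1-2\epsilon}{1-\epsilon}=1-\tfrac{\epsilon}{1-\epsilon}$, which is positive since $\epsilon\le 1/n\le 1/4$, in the regime where the projection is inactive I have $v(t+1)=\rho\,(v(t)+\alpha(t)/2)$. Unrolling and using that $\alpha$ is nonincreasing, so $\alpha(k)\ge\alpha(t)$ for $k\le t$, the geometric weights $\rho^{t-k}$ accumulate to $\Theta(1/(1-\rho))=\Theta(1/\epsilon)$ over the last $\sim 1/\epsilon$ steps, giving the lower bound $v(t)\ge c\,\tfrac{\alpha(t)}{\epsilon}=\Omega(\epsilon^{-1}/\sqrt t)$, which is exactly Eq.~(\ref{eq:decaylowerbound}). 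For the matching upper bound I would carry the inductive invariant $v(t)\le C\alpha(t)/\epsilon$, which closes once $\epsilon t$ is large, because then the per-step decay of $\alpha$ is slow relative to the contraction $\rho$; this is where the ``large enough $t$'' threshold $t\gtrsim 1/\epsilon$ enters. The upper bound is what certifies the construction: it yields $g_{u_i}(t)=\tfrac{\epsilon q(t)}{(1-\epsilon)\alpha(t)}\le 1<\gamma$, so the chosen $g_{u_i}(t)$ is an admissible subgradient for every $t$, and I would separately dispatch the easier projection-active regime, where $q(t)=a$ forces $\epsilon\sqrt t=O(1)$ and hence again $g_{u_i}(t)\le 1$.

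The main obstacle I anticipate is exactly this coupling of the two bounds: the lower bound on $v(t)$ is the conclusion of the theorem, but it is vacuous unless the pinning subgradient $g_{u_i}(t)$ stays inside $[-\gamma,\gamma]$, which demands a matching \emph{upper} bound on $v(t)$ together with careful bookkeeping of the projection onto $[-a,a]$. The delicate point is the quasi-static claim that, after a transient of length $\sim 1/\epsilon$, $v(t)$ tracks its slowly drifting equilibrium $\tfrac{1-\epsilon}{2\epsilon}\alpha(t)$ from both sides; quantifying that transient, and checking that the requirement $\gamma>1$ (rather than merely $\gamma>\tfrac12$) is precisely what absorbs the worst case $g_{u_i}\le 1$, is where the constants $\gamma$ and $a$ are pinned down independently of $n$ and $\epsilon$.
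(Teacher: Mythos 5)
Your proposal is correct and follows essentially the same route as the paper: collapse the dynamics by symmetry to the scalar recursion $v(t+1)=\frac{1-2\epsilon}{1-\epsilon}\left(v(t)+\frac{\alpha(t)}{2}\right)$, pin the $u$-nodes at $x^*=0$ by exploiting the freedom in $\partial(\gamma|x|)(0)$, and extract the $\Omega(\epsilon^{-1}/\sqrt{t})$ lower bound from the geometric accumulation over the last $\Theta(1/\epsilon)$ steps, with the matching upper bound certifying admissibility of the pinning subgradient. The one tactical difference is that you take $a<1$ so the projection forces $g_{v_i}\equiv-\tfrac12$ at the cost of an active-clipping phase of length $O(\epsilon^{-2})$, whereas the paper takes $a$ large enough that the projection never activates and instead tracks ${\rm sign}(y(t)-1)$ via the invariant $y(t)\in[0,2]$; also, your numerical claim $g_{u_i}(t)\le 1$ is slightly optimistic (the sharp bound is a universal constant closer to the paper's choice $\gamma=3$), but this is harmless since $\gamma$ is a free constant independent of $n$ and $\epsilon$.
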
 

To parse the statement of this theorem, note that there is some freedom to choose the subgradient of functions like $|x|$ at $|x-1|$ at zero and one, respectively. Consequently, the theorem has to assert that there is a choice of subgradients such that the solution of Eq. (\ref{eq:mainupdate1}) has the desired behavior. 

To put the guarantees of this theorem into context, observe that, as a consequence of Eq. (\ref{eq:decaylowerbound}), not only does the average  $(1/n) \sum_{i=1}^n x_i(t) - x^*$ scale as $O(\epsilon^{-1}/\sqrt{t})$ but so does any convex combination of this quantity over the various $t$'s (in particular, the quantities $x_{\alpha}(t)$ or $x_{\alpha}'(t)$ discussed earlier). It immediately follows that $F(\cdot)-F^*$ for all of these quantities also scales linearly with $\epsilon^{-1}$, i.e., 
\begin{align}
F(x_{\alpha}(t)) - F^* & \geq  \Omega \left( \frac{\epsilon^{-1}}{\sqrt{t}} \right) \label{eq:fdecay1} \\ 
F(x_{\alpha}'(t)) - F^* & \geq  \Omega \left( \frac{\epsilon^{-1}}{\sqrt{t}} \right) \label{eq:fdecay2}
\end{align}
Moreover, since $\epsilon \leq 1/n$, the performance of Eq. (\ref{eq:mainupdate1}) with $1/\sqrt{t}$ step-size does not attain a speedup over the corresponding single-node rate.
This is to be contrasted with Eq. (\ref{eq:firstneti}) and Eq. (\ref{eq:secondneti}) which, provided one  waits long enough, attain a linear speedup over the single-node rate. 

In fact, it is easy to see that, for the matrix $W_{G_n'}$, the inverse spectral gap will grow with $\epsilon^{-1}$, so that the presence of $\epsilon^{-1}$ in Eq. (\ref{eq:fdecay1})  and Eq. (\ref{eq:fdecay2}) is equivalent to scaling with $(1-\sigma)^{-1}$.  For completeness, we give a proof of this assertion  later in this paper as Proposition \ref{prop:gap}. 

The main result of this paper is the contrast between Theorems \ref{thm:main} and Theorem \ref{thm:supp}. In the former case, we have asymptotic network independence and a linear speedup whenever the step-size is $1/t^{\beta}$ when $\beta>1/2$. Unfortunately, the latter theorem shows that setting $\beta=1/2$ can ruin  this. 

We remark that the last theorem has some similarities with Eq. (75) of \cite{neglia2020decentralized}, which considers the speed at which a decentralized optimization method can move towards infinity when no minimizer exists under a constant step-size, and finds it can be network-dependent. 

Finally, we remark that a visual illustration of this is given in Section \ref{sec:simul}, and the interested reader can skip ahead to see what the difference in performance looks like in a simulation of this example between $1/\sqrt{t}$ step-size and $1/t^{\beta}$ step-size where $\beta \in (1/2,1)$.


\subsection{Organization of this paper} We give a proof of Theorem \ref{thm:main} in Section \ref{sec:firstproof} and a proof of Theorem \ref{thm:supp} in Section \ref{sec:secondproof}. A simulation of our counterexample to network independence with $1/\sqrt{t}$ step-size is given in Section \ref{sec:simul}, which shows what the contrast between the presence of absence of network independence looks like numerically. 

Our results raise the possibility that while a more slowly decaying step-size might be better for the centralized subgradient method, the opposite might be true in the distributed case (note that the transient bound of Theorem \ref{thm:main} improves with higher $\beta$). We show that this kind of ``step size inversion'' does indeed occur on a class of randomly generated problems in Section \ref{sec:inversion}. Finally, our concluding Section \ref{sec:concl} mentions several open problems and future directions. 

\section{Proof of Theorem \ref{thm:main}\label{sec:firstproof}}
   
In this section, we provide a proof of Theorem \ref{thm:main}. Our first step is to rewrite Eq. (\ref{eq:mainupdate1}) in a way that will be easier to analyze. We set $s(t)$ to be the ``gradient mapping'' defined as
\[ s(t) := \frac{x(t) -   P_{\Omega} \left[  x(t) - \alpha(t) g(t) \right] }{\alpha(t)} \] so that  Eq. (\ref{eq:mainupdate1}) can be written as
\begin{equation} \label{eq:mainupdate2} x(t+1) = W  \left[ x(t) - \alpha(t) s(t) \right]  
\end{equation} 
Consistent with our previous notation, we will use $s_i(t)$ to denote the $i$'th row of the matix $s(t)$. 

In this formulation, we  no longer have to explicitly deal with the projection, which is incorporated into the definition of $s(t)$. As we will see, the quantity $s(t)$, which is typically known as the ``gradient mapping'' in the case where the functions are smooth, has some properties similar to the properties of the a subgradient. Although we suspect that this is well-known, we have been unable to find a reference; to our knowledge, in the current literature, various properties of $s(t)$ are only listed in the case where the functions $f_i(\cdot)$ are smooth, which is assumption we do not make here.  The next lemma is our first statement to this effect, showing that $s_i(t)$ inherits any upper bound on $g_i(t)$.

\begin{lemma} If $||g_i(t)||_2 \leq L$ then  $||s_i(t)||_2 \leq L$. \label{lemm:supper}
\end{lemma}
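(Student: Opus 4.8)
The plan is to reduce the desired bound to the nonexpansiveness of the Euclidean projection onto a convex set. Extracting the $i$'th row of the defining equation for $s(t)$, we have
\[ s_i(t) = \frac{x_i(t) - P_{\Omega}\left[ x_i(t) - \alpha(t) g_i(t) \right]}{\alpha(t)}, \]
so the claim $\|s_i(t)\|_2 \leq L$ is equivalent to the estimate
\[ \left\| x_i(t) - P_{\Omega}\left[ x_i(t) - \alpha(t) g_i(t) \right] \right\|_2 \leq \alpha(t) L, \]
after multiplying through by $\alpha(t) > 0$. The strategy is to recognize the left-hand side as the distance between the projections of two nearby points and then invoke the $1$-Lipschitz property of $P_{\Omega}$.

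The key preliminary observation I would establish is that $x_i(t) \in \Omega$ for every $i$ and every $t$, so that $P_{\Omega}[x_i(t)] = x_i(t)$. This follows by a short induction using the update rule Eq. (\ref{eq:mainupdate2}): the base case holds because each node begins with $x_i(0) \in \Omega$ by Assumption \ref{ass:functions}; for the inductive step, note that every row of $P_{\Omega}[x(t) - \alpha(t) g(t)]$ lies in $\Omega$ by definition of the projection, and then each row $x_i(t+1)$ of $WP_{\Omega}[\,\cdot\,]$ is a convex combination $\sum_j W_{ij}(\cdots)_j$ of such rows because $W$ is stochastic (Assumption \ref{ass:matrix}), hence remains in the convex set $\Omega$.

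With this in hand, I would rewrite the first argument of the norm as $x_i(t) = P_{\Omega}[x_i(t)]$ and apply nonexpansiveness:
\[ \left\| P_{\Omega}[x_i(t)] - P_{\Omega}\left[ x_i(t) - \alpha(t) g_i(t) \right] \right\|_2 \leq \left\| x_i(t) - \left( x_i(t) - \alpha(t) g_i(t) \right) \right\|_2 = \alpha(t)\, \|g_i(t)\|_2 \leq \alpha(t) L. \]
Dividing by $\alpha(t)$ yields $\|s_i(t)\|_2 \leq L$, completing the argument. I do not expect any serious obstacle here: the only substantive point is the membership $x_i(t) \in \Omega$, which is precisely what lets us replace the bare iterate by its own projection and thereby convert the expression into a difference of projections; the nonexpansiveness of $P_{\Omega}$ onto a closed convex set is a standard fact that I would cite rather than reprove.
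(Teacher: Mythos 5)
Your proposal is correct and follows essentially the same route as the paper's own proof: establish that $x_i(t) \in \Omega$ because each iterate is a convex combination (via the stochastic $W$) of points projected onto $\Omega$, rewrite $x_i(t)$ as $P_{\Omega}[x_i(t)]$, and invoke nonexpansiveness of the projection to bound the numerator by $\alpha(t)\,\|g_i(t)\|_2$. The only difference is that you spell out the induction on $t$ explicitly, which the paper leaves implicit.
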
 

\begin{proof} We first observe that for all $i,t$ we have that $x_i(t) \in \Omega$. Indeed, $x_i(t)$ is obtained as a convex combination of vectors projected onto $\Omega$ and so itself belongs to $\Omega$ by convexity.  
We then use this, along with the fact that projection onto convex sets is nonexansive, to argue that 
\begin{align*}
    ||s_i(t)||_2 & = \frac{|| x_i(t) -P_{\Omega} \left[ x_i(t) - \alpha(t) g_i(t) \right]||_2}{|\alpha(t)|} \\ 
        & = \frac{|| P_{\Omega} \left[ x_i(t) \right] -P_{\Omega} \left[ x_i(t) - \alpha(t) g_i(t) \right]||_2}{|\alpha(t)|} \\
        & \leq \frac{||\alpha(t) g_i(t)||_2}{|\alpha(t)|} \\ 
        & \leq ||g_i(t)||_2 \\ 
        & \leq L.
\end{align*} 
\end{proof} 

Next, we note that it is standard that the subgradient $g_i(t)$ of the convex function $f_i(x)$ at $x_i(t)$ satisfies the relation
\begin{equation} \label{eq:standard_subgr} g_i(t) (x_i(t) - x^*)^T \geq f(x_i(t)) - f_i(x^*). \end{equation} 
Our next lemma shows that $s_i(t)$ satisfies a similar inequality up to a ``higher order'' term. 
\begin{lemma}   Under Assumption \ref{ass:functions}, we have that for all $i=1, \ldots, n$, \[ \alpha(t) s_i(t) (x_i(t) - x^*)^T \geq \alpha(t) f(x_i(t)) - f_i(x^*) - \frac{\alpha^2(t)}{2} L^2.\] \label{lemm:s}
\end{lemma}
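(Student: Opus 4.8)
The plan is to reduce everything to the defining projection property and the subgradient inequality \eqref{eq:standard_subgr}, and then to absorb a single cross term by completing a square. First I would introduce the shorthand $z_i(t) := P_{\Omega}\left[x_i(t) - \alpha(t) g_i(t)\right]$, so that by the definition of $s(t)$ we have $\alpha(t) s_i(t) = x_i(t) - z_i(t)$, and the left-hand side of the claimed inequality is exactly the inner product $\langle x_i(t) - z_i(t), \, x_i(t) - x^* \rangle$ (recall $s_i(t)$ is a row and $(x_i(t)-x^*)^T$ a column, so the product is this scalar).

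The central tool is the variational (obtuse-angle) characterization of Euclidean projection: since $z_i(t)$ is the projection of $x_i(t) - \alpha(t) g_i(t)$ onto $\Omega$ and $x^* \in \Omega$, we have $\langle x_i(t) - \alpha(t) g_i(t) - z_i(t), \, x^* - z_i(t)\rangle \le 0$, which rearranges to $\langle x_i(t) - z_i(t), \, z_i(t) - x^*\rangle \ge \alpha(t) \langle g_i(t), \, z_i(t) - x^*\rangle$. I would then write $\langle x_i(t) - z_i(t), \, x_i(t) - x^*\rangle = \|x_i(t)-z_i(t)\|_2^2 + \langle x_i(t) - z_i(t), \, z_i(t) - x^*\rangle$ and insert the bound above, obtaining a lower bound in which $\alpha(t)\langle g_i(t), \, z_i(t) - x^*\rangle$ appears together with the square $\|x_i(t)-z_i(t)\|_2^2$.

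Next I would split $\langle g_i(t), \, z_i(t) - x^*\rangle = \langle g_i(t), \, x_i(t) - x^*\rangle + \langle g_i(t), \, z_i(t) - x_i(t)\rangle$. The first piece is handled by the subgradient inequality \eqref{eq:standard_subgr}, yielding the desired $f_i(x_i(t)) - f_i(x^*)$; the second is bounded below by $-L\,\|x_i(t) - z_i(t)\|_2$ using Cauchy–Schwarz together with $\|g_i(t)\|_2 \le L$. Collecting terms, it then remains only to verify the purely scalar inequality $u^2 - \alpha(t) L\, u \ge -\tfrac12 \alpha^2(t) L^2$ for $u := \|x_i(t) - z_i(t)\|_2 \ge 0$, which follows by completing the square (indeed the left side is at least $-\tfrac14 \alpha^2(t) L^2$, so the claimed slack of $\tfrac12 \alpha^2(t) L^2$ holds with room to spare).

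The one place requiring care—the main obstacle—is the cross term $\langle g_i(t), \, z_i(t) - x_i(t)\rangle$: because the gradient mapping $s_i(t)$ encodes a subgradient evaluated at $x_i(t)$ while the projection lands at the distinct point $z_i(t)$, this term cannot simply be discarded and instead is precisely what generates the $O(\alpha^2(t))$ ``higher-order'' correction in the statement. The quadratic term $\|x_i(t) - z_i(t)\|_2^2$ produced by the projection identity is exactly what dominates it, so the whole argument hinges on retaining that square rather than throwing it away.
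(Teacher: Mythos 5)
Your proof is correct, but it takes a genuinely different route from the one in the paper. The paper's argument subtracts $x^*$ from both sides of the defining identity $x_i(t) - \alpha(t) s_i(t) = P_{\Omega}\left[ x_i(t) - \alpha(t) g_i(t) \right]$, takes squared Euclidean norms, and invokes only the \emph{non-expansiveness} of the projection relative to the fixed point $x^* \in \Omega$; the error term $\tfrac{1}{2}\alpha^2(t) L^2$ then falls out directly from expanding $\|x_i(t) - x^* - \alpha(t) g_i(t)\|_2^2$, while the nonnegative term $\alpha^2(t)\|s_i(t)\|_2^2$ on the other side is simply discarded. You instead use the sharper variational (obtuse-angle) characterization of the projection at the point $z_i(t)$, retain the quadratic $\|x_i(t) - z_i(t)\|_2^2$, and use it to absorb the cross term $\alpha(t)\langle g_i(t), z_i(t) - x_i(t)\rangle$ by Cauchy--Schwarz and completing the square. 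Both arguments are valid and both rest on $x^* \in \Omega$ plus the standard subgradient inequality and the bound $\|g_i(t)\|_2 \le L$; the paper's is shorter and avoids any case analysis on the cross term, while yours isolates exactly where the $O(\alpha^2(t))$ correction originates and in fact yields the slightly better constant $\tfrac{1}{4}\alpha^2(t) L^2$, so the stated bound holds with room to spare.
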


Note that whereas Eq. (\ref{eq:standard_subgr}) does not contain the step-size $\alpha(t)$, Lemma \ref{lemm:s} does. This is because the definition of the quantity $s_i(t)$ contained the step-size $\alpha(t)$ (unlike the subgradient $g_i(t)$ which  is obviously defined independently of step-size). 

\smallskip

\begin{proof}
We start from the relation
\[ x_i(t) - \alpha(t) s_i(t) =  P_{\Omega} \left[ x_i(t) - \alpha(t) g_i(t) \right], \] which is just a rearrangement of the definition of $s(t)$. Our next step is to subtract $x^*$ and take the squared Euclidean norm of both sides.  On the left-hand side, we have 
\[ ||x_i(t) - x^*||^2 - 2 \alpha(t) s_i(t) (x_i(t) - x^*)^T + \alpha^2(t) ||s_i(t)||_2^2. \] 
On the right-hand side, we use the fact that projecting onto $\Omega$ cannot increase Euclidean distance from $x^*$ to obtain an upper bound of \[  ||x_i(t) - x^*||_2^2 - 2 \alpha(t) g_i(t) (x_i(t) - x^*)^T + \alpha^2(t) ||g_i(t)||_2^2  \] Putting these two facts together, we obtain the inequality 
\begin{align*} - 2 \alpha(t) s_i(t) (x_i(t) - x^*)^T \leq & - 2 \alpha(t) g_i(t) (x_i(t) - x^*)^T  + \alpha^2(t) ||g_i(t)||_2^2 
\end{align*} or 
\begin{align*}  2 \alpha(t) s_i(t) (x_i(t) - x^*)^T \geq &  2 \alpha(t) g_i(t) (x_i(t) - x^*)^T   - \alpha^2(t) ||g_i(t)||_2^2 
\end{align*} Now using Assumption \ref{ass:functions} and Eq. (\ref{eq:standard_subgr}), we obtain 
\[ 2 \alpha(t) s_i(t)^T (x_i(t) - x^*) \geq 2 \alpha(t) (f_i(x_i(t)) - f_i(x^*)) - \alpha^2(t) L^2,\] which proves the lemma. 
\end{proof}

The final lemma we will need bounds the distance between each $x_i(t)$ and $\overline{x}(t)$  as $O(\alpha(t))$ (where the constant inside this $O(\cdot)$-notation will depend on the matrix $W$). Such bounds are  standard in the distributed optimization literature.

We introduce some new notation which we will find convenient to use. 
We will use ${\bf 1}$ to denote the all-ones vector in $\R^n$, so that ${\bf 1} \overline{x}(t)$ has the same dimensions as $x(t)$. We adopt the notation $\sigma$ to denote the second-largest singular value of the matrix $W$; under Assumption \ref{ass:matrix}, we have that $\sigma < 1$ while the largest singular value is $1$ corresponding to the all-ones vector\footnote{Formally, this is implied from Lemma 4 of \cite{nedic2009distributed2}. That lemma implies that $||Wx||_2 \leq ||x||_2$ and, under Assumption \ref{ass:matrix}, equality holds if and only if $x$ is a multiple of ${\bf 1}$.}. In the sequel, for a vector $y$ we will use the inequality
\begin{equation} \label{eq:wineq} ||W(y - \overline{y})||_2^2 \leq \sigma^2 ||y-\overline{y}||_2^2 \leq \sigma^2 ||y||_2^2. 
\end{equation}

With these preliminaries in place, we have the following lemma.

\begin{lemma} Suppose Assumptions \ref{ass:sum}, \ref{ass:firstc}, and \ref{ass:secondc} on the step-size, Assumption \ref{ass:functions} on the functions, and Assumption \ref{ass:matrix} on the mixing matrix $W$ all hold.  When $$ t \geq \Omega \left( \frac{1}{1-\sigma} \log  \frac{(1-\sigma) t \alpha_{\rm max}}{C_{\alpha}' \alpha(t) } \right) $$ we have that 
\[ ||x(t) - {\bf 1} \overline{x} ||_F \leq \frac{2 C_{\alpha}' \alpha(t) L \sqrt{n}}{1-\sigma}. \] \label{lemm:distance}
\end{lemma}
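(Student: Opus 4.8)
The plan is to track the average iterate and the deviation-from-average separately, exploiting that $W$ is doubly stochastic. Because $\mathbf{1}^T W = \mathbf{1}^T$, multiplication by $W$ preserves the row-average, so from Eq.~(\ref{eq:mainupdate2}) the mean obeys the clean subgradient-type recursion $\overline{x}(t+1) = \overline{x}(t) - \alpha(t)\overline{s}(t)$. Writing $y = x(t) - \alpha(t)s(t)$ and using $W\mathbf{1}=\mathbf{1}$, one checks that $x(t+1) - \mathbf{1}\overline{x}(t+1) = Wy - \mathbf{1}\overline{y} = W(y - \mathbf{1}\overline{y})$, i.e.\ the deviation at time $t+1$ is exactly $W$ applied to the centered version of $y$. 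This is the structural identity that lets the spectral contraction enter.

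First I would apply the contraction of Eq.~(\ref{eq:wineq}) column-by-column in the Frobenius norm to $W(y - \mathbf{1}\overline{y})$, and then split $y - \mathbf{1}\overline{y} = (x(t) - \mathbf{1}\overline{x}(t)) - \alpha(t)(s(t) - \mathbf{1}\overline{s}(t))$. Since centering is a contraction and each row of $s(t)$ has norm at most $L$ by Lemma~\ref{lemm:supper}, we have $\|s(t) - \mathbf{1}\overline{s}(t)\|_F \le \|s(t)\|_F \le L\sqrt{n}$. Abbreviating $D(t) := \|x(t) - \mathbf{1}\overline{x}(t)\|_F$, the triangle inequality then yields the scalar recursion $D(t+1) \le \sigma D(t) + \alpha(t) L\sqrt{n}$. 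Because all nodes share the same initial condition (Assumption~\ref{ass:functions}), $D(0)=0$, and unrolling gives
\[ D(t) \le L\sqrt{n}\sum_{k=0}^{t-1}\sigma^{\,t-1-k}\alpha(k). \]

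The main work, and the expected obstacle, is bounding this weighted geometric sum so that the transient condition comes out exactly as stated. I would split the sum at $\lfloor t/2\rfloor$. For the recent indices $k \ge \lfloor t/2\rfloor$, monotonicity of $\alpha$ together with Assumption~\ref{ass:secondc} gives $\alpha(k) \le \alpha(\lfloor t/2\rfloor) \le C_{\alpha}'\alpha(t)$, so summing the geometric tail bounds this part by $C_{\alpha}'\alpha(t)/(1-\sigma)$. For the old indices $k < \lfloor t/2\rfloor$, I would bound crudely by (number of terms)$\times$(largest term), namely $\tfrac{t}{2}\,\alpha_{\rm max}\,\sigma^{\lceil t/2\rceil}$; it is precisely this counting bound (rather than a geometric one) that produces the factor $(1-\sigma)t$ inside the logarithm. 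The delicate point is then converting the requirement $\tfrac{t}{2}\alpha_{\rm max}\sigma^{\lceil t/2\rceil} \le C_{\alpha}'\alpha(t)/(1-\sigma)$ into the lower bound on $t$: taking logarithms and using $\log(1/\sigma) \ge 1-\sigma$ for $\sigma \in (0,1)$ shows that the hypothesis $t \ge \Omega\!\left(\tfrac{1}{1-\sigma}\log\big[(1-\sigma)t\alpha_{\rm max}/(C_{\alpha}'\alpha(t))\big]\right)$ is exactly what forces the old part to be at most the recent part. Adding the two halves gives $D(t) \le 2C_{\alpha}'\alpha(t)L\sqrt{n}/(1-\sigma)$, as claimed. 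I expect the bookkeeping with $\lceil t/2\rceil$ versus $t/2$ and tracking the absolute constant through the logarithmic inversion to be the only genuinely fiddly step; everything else is the standard consensus-contraction argument.
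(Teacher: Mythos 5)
Your proposal is correct and follows essentially the same route as the paper: both arguments reduce the deviation $\|x(t)-\mathbf{1}\overline{x}(t)\|_F$ to the weighted geometric sum $L\sqrt{n}\sum_k \sigma^{t-k}\alpha(k)$ (the paper by unrolling the full trajectory from the common initial condition, you by unrolling the one-step contraction $D(t+1)\le\sigma D(t)+\alpha(t)L\sqrt{n}$, which is equivalent), and both then split the sum at $t/2$, bounding the recent half by $C_{\alpha}'\alpha(t)/(1-\sigma)$ via monotonicity and Assumption~\ref{ass:secondc} and the old half by $\tfrac{t}{2}\alpha_{\rm max}\sigma^{\lceil t/2\rceil}$, with the stated transient condition forcing the latter below the former. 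No gaps.
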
 

\begin{proof} Recall that, by assumption, the initial conditions are identical; let us denote them all by $x_1$. Thus starting from Eq. (\ref{eq:mainupdate2}) we have that 
\[ x(t) = W^{t-1} x_1 - \alpha(1) W^{t-1} s(1) - \cdots  - \alpha(t-1) W s(t-1) \] we can use the fact that multiplication by a doubly stochastic matrix doesn't affect the mean of a vector to obtain that 
\[ \overline{x}(t) = x_1 - \alpha(1) \overline{s}(1) - \cdots - \alpha(t-1) \overline{s}(t-1). \] Next, using the ``MATLAB notation''  $[A]_{:,i}$ for the $i$'th column of a matrix $A$, we can apply  Eq. (\ref{eq:wineq}) in the following sequence of equations:
\begin{align*}||x(t) - {\bf 1} \overline{x}(t)||_F & \leq  \sum_{k=1}^{t-1} \alpha(k) ||W^{t-k} (s(k) - {\bf 1} \overline{s}(k))||_F \\
& = \sum_{k=1}^{t-1} \alpha(k) \sqrt{\sum_{i=1}^n||W^{t-k} [s(k) - {\bf 1} \overline{s}(k)]_{:,i}||_2^2} \\  
& \leq  \sum_{k=1}^{t-1} \alpha(k) \sqrt{\sum_{i=1}^n \sigma^{2(t-k)} || [s(k)]_{:,i}||_2^2} \\
& \leq \sum_{k=1}^{t-1} \alpha(k) \sigma^{t-k} ||s(k)||_F. 
\end{align*} Let us break the last sum at $t'=t-\lceil t/2 \rceil$ and bound each of the two pieces separately. The first piece, over the range $t=1, \ldots, t'$ is bounded simply using the fact that all subgradients are upper bounded by $L$ in the Euclidean norm (and consequently $||s(t-k)||_F \leq L \sqrt{n})$; whereas the second piece, over the last $t/2$ steps, is upper bounded as a geometric sum. 
The result is
\[ ||x(t) - {\bf 1} \overline{x}(t)||_F \leq  \frac{t}{2}  \alpha_{\rm max} L  \sqrt{n} \sigma^{\lceil t/2 \rceil} + \frac{L \sqrt{n}}{1-\sigma} \alpha(\lfloor t/2 \rfloor). \] We next use that 
$x^{1/(1-x)} \leq e^{-1}$ when $x \in [0,1]$ as well as Assumption \ref{ass:secondc} to obtain that 
\begin{align*}  ||x(t) - {\bf 1} \overline{x}(t)||_F \leq  t \alpha_{\rm max} L \sqrt{n}  e^{-t (1-\sigma)/2} + \frac{L \sqrt{n} C_{\alpha}' \alpha(t)}{1-\sigma}.
\end{align*} 
When $t \geq \Omega((1-\sigma)^{-1} \log \left[ (1-\sigma) (t \alpha_{\rm max}/(C_{\alpha}' \alpha(t)) \right]$, the first term is upper bounded by the second and the lemma is proved.  

\end{proof} 

With all these lemmas in place, we are now ready to give a proof of our first main result. 

\begin{proof}[Proof of Theorem \ref{thm:main}]
Starting from Eq. (\ref{eq:mainupdate2})  we obtain
\begin{align*}
    \overline x(t+1) - x^*= \overline{x}(t) - \alpha(t) \overline{s}(t) - x^* 
\end{align*}
so that
\begin{align*}
    ||\overline{x}(t+1) - x^*||_2^2  = &   ||\overline{x}(t) - x^*||_2^2 + \alpha^2(t) ||\overline{s}(t)||^2  - 2 \alpha(t) \overline{s}(t) (\overline{x}(t) - x^*)^T \\ 
     = &  ||\overline{x}(t) - x^*||_2^2 + \alpha^2(t) ||\overline{s}(t)\||_2^2  - 2 \alpha(t) \left(\frac{1}{n} \sum_{i=1}^n s_i(t) (\overline{x}(t) - x^*)^T \right) \\
     = &  ||\overline{x}(t) - x^*||_2^2 + \alpha^2(t) ||\overline{s}(t)||_2^2 - 2 \alpha(t) \left(\frac{1}{n} \sum_{i=1}^n s_i(t) (x_i(t) - x^*)^T \right)  \\ & + 2 \alpha(t) \left(\frac{1}{n} \sum_{i=1}^n s_i(t) (x_i(t) - \overline{x}(t))^T \right) \\
     \leq &  ||\overline{x}(t) - x^*||_2^2 +  \alpha^2(t) L^2   - 2 \alpha(t) \frac{1}{n} \sum_{i=1}^n f_i(x_i(t)) - f_i(x^*) 
     \\ & + L^2 
     \alpha^2(t) + 2 \alpha(t)  \frac{1}{n} \sum_{i=1}^n L ||x_i(t) - \overline{x}(t)||_2, 
     \end{align*} where, in the above sequence of inequalities, we used Lemma \ref{lemm:supper} to bound the norm of $||\overline{s}(t)||_2^2$, Lemma \ref{lemm:s} to bound $s_i(t)^T (x_i(t) - x^*)$, and Cauchy-Schwarz in the very last step. Now using the fact that each $f_i(\cdot)$ is $L$-Lipschitz, which follows from Assumption \ref{ass:functions},
     we have 
     \begin{align}
       ||\overline{x}(t+1) - x^*||_2^2  \leq  ||\overline{x}(t) - x^*||_2^2 + 2 \alpha^2(t) L^2 &  - 2 \alpha(t) \frac{1}{n} \sum_{i=1}^n f_i(\overline{x}(t)) - f_i(x^*) \nonumber  \\ & + 4 \alpha(t) L \frac{1}{n} \sum_{i=1}^n ||x_i(t) - \overline{x}(t)||_2. \label{eq:disagreement}
\end{align} 

We next bound the very last term in the sequence of inequalities above. Our starting point is the observation that  
\[ \sum_{i=1}^n ||x_i(t) - \overline{x}(t)||_2 \leq \sqrt{n} ||x(t) - \overline{x}(t)||_F,\] which follows by an application of Cauchy-Schwarz.  We then use Lemma \ref{lemm:distance} to bound the right-hand side. This yields that, for $t$ large enough to satisfy the assumptions of that lemma, 
\begin{align*} ||\overline{x}(t+1) - x^*||_2^2
\leq &  ||\overline{x}(t) - x^*||_2^2 + 2 \alpha^2(t) L^2  - 2 \alpha(t) \frac{1}{n} \sum_{i=1}^n f_i(\overline{x}(t)) - f_i(x^*)  + 4 \alpha(t) L \frac{2 C_{\alpha}' \alpha(t) L} {1-\sigma}, 
\end{align*} implying that 
\begin{align*} 2 \alpha(t) \left[ F(\overline{x}(t)) - F^* \right] 
\leq &  ||\overline{x}(t) - x^*||_2^2 - ||\overline{x}(t+1) - x^*||_2^2  +  2 \alpha^2(t) L^2    + 8 \alpha^2(t)  \frac{ C_{\alpha}' L^2} {1-\sigma}, 
\end{align*} As before, let $t' = \lfloor t/2 \rfloor$. We sum the last inequality up frome time $t'$ to time $t$ to obtain  
\begin{align*} 
2 \sum_{k=t'}^t \alpha(k) \left[ F(\overline{x}(k)) - F^* \right] \leq ||\overline{x}(t') - x^*||_2^2 +  \frac{10 C_{\alpha}' L^2}{1-\sigma} \sum_{k=t'}^t \alpha^2(k),
\end{align*} where we used that $C_{\alpha}' \geq 1$ (because $\alpha(t)$ is nonincreasing) and that $\sigma < 1$ to combine the terms involving $\alpha^2(t)$. 

Dividing both sides by $2 \sum_{k=t'}^t \alpha(t)$ and using convexity of $F(x)$, we obtain 
\begin{equation} \label{eq:penultimate} 
F(\overline{x}_{\alpha}(t)) - F^*\leq \frac{||\overline{x}(t') - x^*||_2^2}{2 \sum_{k=t'}^t \alpha(k)}  +  \frac{10 C_{\alpha}' L^2}{1-\sigma} \frac{\sum_{k=t'}^t \alpha^2(k)}{2 \sum_{k=t'}^t \alpha(k)}
\end{equation} Now because $\alpha(t)$ is square summable, we have that 
\[ \lim_{t \rightarrow \infty} \sum_{k=t'}^t \alpha^2(k) \leq \lim_{t \rightarrow \infty} \sum_{k=\lfloor t/2 \rfloor}^{+\infty} \alpha^2(k) = 0. \] In particular, Eq. (\ref{eq:alphabound1}) will eventually be satisfied, and when 
that happens, the second term of Eq. (\ref{eq:penultimate}) will be upper bounded by the first. We will then have \[ F(\overline{x}_{\alpha}(t)) - F^*\leq \frac{D^2}{\sum_{k=t'}^t \alpha(k)}  
\] The first part of the theorem, namely Eq. (\ref{eq:firstneti}), now follows immediately. 

Next, we suppose that $\alpha(k)=1/k^{\beta}$ where $\beta \in (1/2,1)$. This step-size satisfies all the assumptions we have made; however, starting from Eq. (\ref{eq:penultimate}), we can write down some more effective bounds. Indeed, by the usual method of upper/bounding sums by the corresponding integrals, we have that
\begin{align*}
    \sum_{k=t'}^t \frac{1}{k^\beta} \geq & \Omega \left( \left( 1-\frac{1}{2^{-\beta + 1}} \right) \frac{t^{-\beta + 1}}{-\beta+1} \right)  = \Omega_{\beta} \left( t^{-\beta + 1}\right) \\
    \sum_{k=t'}^t \left(  \frac{1}{k^\beta} \right)^2 \leq & O \left( \left( 1-\frac{1}{2^{-2\beta + 1}} \right) \frac{t^{-2\beta + 1}}{-2\beta+1}
    \right) = O_{\beta} \left( t^{-2 \beta + 1} \right), 
\end{align*} where the subscript of $\beta$ denotes that the constant could depend on $\beta$. Plugging this into Eq. (\ref{eq:penultimate}), we have \begin{align}
F(\overline{x}_{\alpha}(t)) - F^*
& \leq O_{\beta} \left( \frac{D^2}{t^{1-\beta}}
+ \frac{L^2 t^{-2 \beta + 1}}{(1-\sigma)t^{-\beta+1}} 
\right) \nonumber \\ 
& \leq O_{\beta} \left( \frac{D^2}{t^{1-\beta}}
+ \frac{L^2}{(1-\sigma)t^{\beta}} 
\right) \label{eq:twoterms}
\end{align}
Therefore when  
\[ t^{2 \beta-1} \geq \Omega_{\beta} \left( \frac{L^2 }{D^2 (1-\sigma)} \right) \] we have that the first term of Eq. (\ref{eq:twoterms}) dominates and 
\begin{align*}
F(\overline{x}_{\alpha}(t)) - F^* \leq O_{\beta} \left( 
\frac{D^2}{t^{1-\beta}} 
\right)
\end{align*} 
This proves Eq. (\ref{eq:secondneti}) and the proof of the theorem is now complete. 
\end{proof} 

\section{Proof of Theorem \ref{thm:supp} \label{sec:secondproof}}
 The proof below will analyze an explicit example of a graph where the dependence on spectral gap never disappears, no matter how large $t$ is.  The  graph is $G_n'$, which is a graph on $2n$ vertices $\{u_1, \ldots, u_n\} \cup \{v_1, \ldots,v_n\}$ defined shortly before the statement of Theorem \ref{thm:supp}, and the underlying matrix $W$ is $W=W_{G_n', \epsilon}$, defined in the same place. 
 
 We begin with a brief description of the intuition underlying the counter-example. The high-level idea is that the performance of distributed subgradient descent can be thought of in terms of a recursion that moves towards the optimal solution, perturbed by some error due to network disagreement. Indeed, this is the form taken by the proof of Theorem \ref{thm:main}: we wrote down a recursion satisfied by $F(\overline{x}(t)) - F^*$, and the updates of that recursion featured terms that depended on the network-wide  disagreement $\sum_{i=1}^n ||x_i(t) - {\bf 1} \overline{x}(t)||$ (see Eq. (\ref{eq:disagreement}) above). 
 
 In the absence of any disagreement -- i.e., if we could magically set $\sum_{i=1}^n ||x_i(t) - {\bf 1} \overline{x}(t)||=0$ at every step -- the recursion of Eq. (\ref{eq:disagreement}) will converge to the optimal solution at a rate $F(\cdot) - F^* = O(1/\sqrt{t})$. Now if the disagreement decays faster than $O(1/\sqrt{t})$, it is intuitive that it has no effect on the ultimate convergence rate. On the other hand, if the disagreement decays as $\sim 1/\sqrt{t}$, then it may well dominate. Furthermore,  it should also be intuitive that the term $\sum_{i=1}^n ||x_i(t) - {\bf 1} \overline{x}(t)||$ can only be bounded in terms of the spectral gap (since, in effect, this term measures how successful repeated multiplication by the matrix $W$ is in driving all the nodes closer together), and this way the spectral gap will appear in the final performance of the method. 
 
 In short, we need to argue that the recursion of Eq. (\ref{eq:disagreement}) is, in some sense, ``tight'' when the step-size is $1/\sqrt{t}$.  Unfortunately, we know of no pleasant way to make this argument. The only way we are able to do this is to come up with an example where we can write down an explicit formula for the solution of Eq. (\ref{eq:no}) at any time $t$, and this is in effect what we do below on the graph $G_n'$. The graph $G_n'$ is particularly well-suited for the purpose because it is symmetric:  we can write down a solution where all $x_{u_i}(t)$ have the same value, and all $x_{v_i}(t)$ have the same value. Unfortunately, even in this simple case, the argument turns out to be somewhat involved. 
 \bigskip

 We next turn to the proof itself. The key ingredient will be the following technical lemma.

\begin{lemma}
Consider the update rule determined by $y(1)=0$ and 
\begin{small}
\begin{equation} \label{eq:yupdate} y(t+1) = \left( 1 - \epsilon \right) y(t) - \frac{   (1/2) (1-\epsilon) {\rm sign}(y(t)-1) + \epsilon \Delta(t) }{\sqrt{t}},
\end{equation} 
\end{small} where 
\[ \Delta(t) = \frac{\epsilon \sqrt{t} y(t) - (\epsilon/2)  {\rm sign}(y(t)-1)}{1-\epsilon}. \] Here ${\rm sign}(x)$ is the  function which returns $1$ when $x \geq 0$ and $-1$ otherwise. 

Then when $\epsilon \in (0,1/4]$, we have that $y(t) \in [0,2]$ for all $t$;  and furthermore,
\begin{equation} \label{eq:ysqrtbound}
y(t) \leq  \frac{ 2\epsilon^{-1}}{\sqrt{t}},
\end{equation} for all $t$. Finally, for all $t$ larger than some $t_1$, we also have 
\begin{equation} \label{eq:concl}  y(t) \geq \frac{\epsilon^{-1}}{16 \sqrt{t}}. \end{equation} \label{lemm:recurr}
\end{lemma}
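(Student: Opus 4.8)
The plan is to first collapse the two-line recursion into a single clean scalar map. Substituting the formula for $\Delta(t)$ into Eq.~(\ref{eq:yupdate}), the coefficient of $y(t)$ becomes $(1-\epsilon)-\epsilon^2/(1-\epsilon)=(1-2\epsilon)/(1-\epsilon)$, and the $\mathrm{sign}$-term simplifies in exactly the same way, so with $\rho:=(1-2\epsilon)/(1-\epsilon)$ the recursion reads
\[ y(t+1)=\rho\Bigl(y(t)-\tfrac{\mathrm{sign}(y(t)-1)}{2\sqrt{t}}\Bigr),\qquad y(1)=0. \]
For $\epsilon\in(0,1/4]$ one has $\rho\in[2/3,1)$. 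From here the confinement $y(t)\in[0,2]$ is a one-line induction splitting on whether $y(t)<1$ (then $y(t+1)=\rho(y(t)+\tfrac{1}{2\sqrt t})\in(0,\tfrac32\rho)$) or $y(t)\ge1$ (then $y(t+1)=\rho(y(t)-\tfrac{1}{2\sqrt t})\in[\tfrac\rho2,2\rho)$), using $\tfrac{1}{2\sqrt t}\le\tfrac12$ and $\rho<1$ so that both intervals lie in $[0,2]$.

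For the upper bound $y(t)\le 2\epsilon^{-1}/\sqrt t$ I would split on the size of $t$. For $t\le\epsilon^{-2}$ it is free, since $\sqrt t\le\epsilon^{-1}$ gives $2\epsilon^{-1}/\sqrt t\ge 2\ge y(t)$. For $t>\epsilon^{-2}$ I run an induction: in both the $y(t)\ge1$ case (where $y(t+1)\le\rho y(t)$) and the $y(t)<1$ case, the desired inequality reduces to an elementary estimate of the form $\rho\sqrt{(t+1)/t}\le 1$ (resp.\ a mildly perturbed version), which holds once $1/t\le\epsilon^2$ because $\rho\le 1-\epsilon$; the single boundary integer near $\epsilon^{-2}$ is absorbed by the slack in these inequalities.

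The lower bound is the real work, and the guiding object is the (time-varying) fixed point of the $y<1$ branch, $y^*(t)=\tfrac{\rho}{2(1-\rho)\sqrt t}=\tfrac{(1-2\epsilon)\epsilon^{-1}}{2\sqrt t}$, which is exactly of order $\epsilon^{-1}/\sqrt t$ and exceeds $1$ precisely while $t\lesssim\epsilon^{-2}$. I would organize the argument in three phases. \emph{Reaching:} while $y$ stays below $1$ the branch is linear, giving $y(t)=\tfrac12\sum_{k=1}^{t-1}\rho^{t-k}/\sqrt k\ge \tfrac{1}{2\sqrt t}\cdot\tfrac{\rho(1-\rho^{t-1})}{1-\rho}$; evaluating at a time $\tau=\Theta(\epsilon^{-2})$ chosen in the window where the geometric factor $1-\rho^{t-1}$ is already $\Omega(1)$ (which needs $\tau\gtrsim1/\epsilon$, since $\ln(1/\rho)=\Theta(\epsilon)$) yet $y^*(\tau)>1$ still holds, the right-hand side reaches a fixed constant, contradicting $y<1$ and forcing $y$ up to a constant level (at least $1/3$) by time $\tau$. \emph{Plateau:} for $\tau\le t\le T_2:=9\epsilon^{-2}/256$ (the time at which $\epsilon^{-1}/(16\sqrt t)$ falls to $1/3$) I maintain $y(t)\ge1/3$ by induction: if $y(t)\ge1$ then $y(t+1)\ge\rho/2\ge1/3$, while if $y(t)<1$ then, because $t<T_2$ keeps $y^*(t)>1>y(t)$, the iterate strictly increases, so $y(t+1)>y(t)\ge1/3$. \emph{Tracking:} for $t\ge T_2$, with base case $y(T_2)\ge1/3=\epsilon^{-1}/(16\sqrt{T_2})$, I run a final induction for $y(t)\ge\epsilon^{-1}/(16\sqrt t)$; the $y(t)<1$ step reduces to $\rho(c\epsilon^{-1}+\tfrac12)\ge c\epsilon^{-1}$ with $c=1/16$, i.e.\ $c\le\rho(1-\epsilon)/2=(1-2\epsilon)/2$, which holds for $\epsilon\le1/4$, and the $y(t)\ge1$ step is immediate since $y(t+1)\ge\rho/2\ge1/3\ge\epsilon^{-1}/(16\sqrt{t+1})$. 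Taking $t_1=\lceil T_2\rceil$ then yields Eq.~(\ref{eq:concl}).

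I expect the reaching phase to be the main obstacle: it is the only place where one must show the iterate actively climbs to a constant level rather than merely not decaying, and it forces a careful choice of the intermediate time $\tau$ and of the plateau level so that the ``mixed but not yet decayed'' window $1/\epsilon\lesssim\tau\lesssim\epsilon^{-2}$ is nonempty. The constants above are tuned for the regime $\epsilon\to0$ that actually drives Theorem~\ref{thm:supp}; the complementary range of $\epsilon$ bounded away from $0$ (where $\epsilon^{-2}$ is a bounded constant and one reaches a level proportional to $y^*(\tau)$ rather than to $1$) is a finite adjustment of $t_1$ and the plateau level.
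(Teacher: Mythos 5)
Your algebraic reduction is correct and clarifying: substituting $\Delta(t)$ does collapse the update to $y(t+1)=\rho\bigl(y(t)-\tfrac{1}{2\sqrt{t}}\,\mathrm{sign}(y(t)-1)\bigr)$ with $\rho=(1-2\epsilon)/(1-\epsilon)\in[2/3,1)$ and $1-\rho=\epsilon/(1-\epsilon)$; the paper carries out the same cancellations case by case but never names $\rho$. Your confinement argument and your upper-bound induction both check out (the paper instead compares $y$ to the majorizing sequence $z(t+1)=(1-\epsilon)z(t)+\tfrac{1}{2\sqrt t}$ and bounds $\sqrt{t}\,z(t)$, but your direct induction with the split at $t\approx\epsilon^{-2}$ works and the boundary-integer slack is genuinely there). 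The tracking induction for the lower bound is also sound: $c\le(1-2\epsilon)/2$ holds for $c=1/16$.

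The gap is in the base case of that tracking induction, i.e.\ the reaching phase. With your constants, the window in which $\tau$ must be chosen --- large enough that $1-\rho^{\tau-1}=\Omega(1)$, which forces $\tau\gtrsim 1/\epsilon$, yet small enough that $\tfrac{(1-2\epsilon)(1-\rho^{\tau-1})}{2\epsilon\sqrt{\tau}}\ge 1/3$ and $\tau\le T_2=\tfrac{9}{256}\epsilon^{-2}$ --- requires roughly $1+\epsilon^{-1}\le\tfrac{9}{256}\epsilon^{-2}$, which already fails for all $\epsilon\gtrsim 0.03$, not just for $\epsilon$ near $1/4$. Your closing remark that the complementary range is ``a finite adjustment'' does not repair this: that range is a continuum of values of $\epsilon$, so one cannot dispose of it by checking finitely many cases, and no alternative argument is supplied for it. The repair is to discard the reaching/plateau phases entirely, which is what the paper does: the lemma only asks for the bound for $t$ larger than \emph{some} $t_1$, and the already-proved estimate $y(t)\le 2\epsilon^{-1}/\sqrt{t}$ forces $y(t)<1$ for all $t\ge t_0$. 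From that point the recursion is exactly linear, $y(t+1)=\rho\,y(t)+\tfrac{\rho}{2\sqrt t}\ge(1-2\epsilon)y(t)+\tfrac{1}{4\sqrt t}$, and unrolling gives $y(t)\ge\tfrac{1}{4\sqrt t}\sum_{k=t_0}^{t-1}(1-2\epsilon)^{t-k-1}$, whose geometric sum tends to $1/(2\epsilon)$; taking $t_1$ large enough that the sum exceeds half its limit yields $y(t)\ge\epsilon^{-1}/(16\sqrt t)$ uniformly over $\epsilon\in(0,1/4]$. This five-line argument replaces your three phases and removes the constraint that made your window empty.
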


We postpone the proof of this lemma for now, as we think the reader will be more interested in it once it is seen how the recursion of Eq. (\ref{eq:yupdate}) naturally appears in the analysis of distributed subgradient descent on the graph $G_n'$. Thus, we will first give a proof of Theorem \ref{thm:supp} which relies on this lemma, and then we will go back and supply a proof of the lemma.

\bigskip

\begin{proof}[Proof of Theorem \ref{thm:supp}] We argue that 
\begin{equation} \label{eq:traj} 
\begin{cases} x_i(t) = 0 & i \in \{  u_1, \ldots, u_n \} \\ 
x_i(t) = y(t) & i \in \{v_1, \ldots, v_n\} 
\end{cases}
\end{equation} is a valid trajectory of Eq. (\ref{eq:no}) with $W=W_{G_n',\epsilon}$ where, recall, $W_{G_n',\epsilon}$ is defined shortly before the statement of Theorem \ref{thm:supp}. Here $y(t)$ is defined in the statement of  Lemma \ref{lemm:recurr} and by ``valid trajectory'' we mean that there exists a sequence of vectors $g_i(t)$, with $g_i(t)$ being a valid subgradient of $f_i(x)$ at $x_i(t)$, resulting in our main update of Eq. (\ref{eq:no}) taking the values specified in Eq. (\ref{eq:traj}) for all $t$. 

Our proof of this, given next,  will depend on a particular choice of  the constants $\gamma>1$ and $a$ from the statement of Theorem \ref{thm:supp}; these constants will be defined in the course of the proof. Once the validity of Eq. (\ref{eq:traj}) is proved, Theorem \ref{thm:supp} is immediate, conditional on Lemma \ref{lemm:recurr}. Indeed, it is easy to see that because $\gamma > 1 > 1/2$, we have that $x^*=0$; and Eq. (\ref{eq:concl}) then provides the lower bound claimed in statement of Theorem \ref{thm:supp}. 

Our proof of the validity of Eq. (\ref{eq:traj}) is by induction. At time $t=1$, we just have $x_i(t)=0$, so there is nothing to prove. 
Suppose Eq. (\ref{eq:traj}) is a valid trajectory over times $1, \ldots, t$, and let us consider time $t+1$. Our first step is to argue that, for an appropriately large choice of the constant $a$, we can simply omit the projection step in Eq. (\ref{eq:no}). 

Indeed, observe that by definition of the functions $f_i(\cdot)$ (see statement of Theorem \ref{thm:supp}), we have that the subgradients are in $[-\gamma, \gamma]$ for $i \in \{u_1, \ldots, u_n\}$, and in $[-1/2,1/2]$ for $i \in \{v_1, \ldots, v_n\}$; and recall that later we will specify $\gamma > 1$. Using Lemma \ref{lemm:recurr},  which asserts that $y(t) \in [0,2]$ for all $t$, we have that $x_i(t) \in [0,2]$ for all $i,t$; and what  follows from all this that   $x_i(t) - \alpha(t) g_i(t) \in [-\gamma, 2+\gamma]$ for all $i,t$. 

To be able to omit the projection step from Eq. (\ref{eq:mainupdate1}), it suffices to have all $x_i(t) - \alpha(t) g_i(t)$ be in the interior of $[-a,a]$. But since we have just argued that $x_i(t) - \alpha(t) g_i(t) \in [-\gamma, 2+\gamma]$ for all $i,t$, we see that it suffices to choose e.g.,  $a = 3+ \gamma$. 

The update of Eq. (\ref{eq:no}) then becomes
\[ x(t+1) = W_{G_n',\epsilon} \left[ x(t) - \alpha(t) g(t) \right]. \] Our next step is to work out what this gives for nodes   $i \in \{ u_1, \ldots, u_n\}$ given the particular form of $W_{G_n',\epsilon}$.  Since all $g_{u_i}(t), i = 1, \ldots, n$ are subgradients of the same function evaluated at zero, we will be considering the case when they are the same. In that case, we have that \begin{small} 
\begin{align*}
    x_{u_i}(t+1) & = (1-n \epsilon) x_{u_i}(t) + (n-1) \epsilon u_i(t) + \epsilon x_{v_i}(t) - \alpha(t) [W_{G_n',\epsilon} g(t)]_{u_i} \\ 
    & = x_{u_i}(t) + \epsilon   \left( x_{v_i}(t) - x_{u_i}(t) \right) - \frac{(1-\epsilon)g_{u_i}(t) + \epsilon g_{v_i}(t)}{\sqrt{t}}. 
\end{align*} \end{small}  Multiplying both sides by $\sqrt{t}$ and using that 
$x_{v_i}(t) = y(t), x_{u_i}(t) =0$ by the inductive hypothesis, we obtain
\[ \sqrt{t} x_{u_i}(t+1) = \epsilon \sqrt{t} y(t) - (1-\epsilon) g_{u_i}(t) - \epsilon g_{v_i}(t). \]

In order to show that Eq. (\ref{eq:traj}) holds at time $t+1$ for the node $u_i$, we need to have  $x_{u_i}(t+1) = 0$. The last equation allows us to see what is needed for this to be the case; setting the left-hand side to zero, we obtain 
\begin{equation} \label{eq:subgrtimet} g_{u_i}(t) = (1-\epsilon)^{-1} \left( \epsilon \sqrt{t} y(t) - \epsilon g_{v_i}(t) \right).
\end{equation} Our argument shows that as long we ``select'' this number as the local subgradient chosen by all the nodes $u_i$ at time $t$, then Eq. (\ref{eq:traj}) holds at time $t+1$ for the nodes $u_i$. But is this number a valid choice of subgradient for all the functions  $f_i(\cdot)$ at the point $0$?  

Observe that, by Lemma \ref{lemm:recurr}, we have that $y(t) \leq 2 \epsilon^{-1}/\sqrt{t}$ for all $t$. Consequently, $\epsilon \sqrt{t} y(t)  \in [0,2]$. Since $g_{v_i}(t) \in [-1/2,1/2]$ and $\epsilon \leq 1/4$ (because we assumed $n \geq 4$ and $\epsilon \leq 1/n$), we have that the expression in parenthesis on right-hand side of Eq. (\ref{eq:subgrtimet}) lies between $-1/2$ and $17/8$. Using $\epsilon \leq 1/4$ again, we see that the right-hand side of Eq. (\ref{eq:subgrtimet}) is at most $17/6$ in absolute value.  Thus, as long as we choose $\gamma$ sufficiently big, e.g., $\gamma=3$,  the choice of $g_{u_i}(t)$ in Eq. (\ref{eq:subgrtimet}) is a valid subgradient. 

To recap, we have just shown that Eq. (\ref{eq:traj}) holds at time $t+1$ for at least for the nodes $u_i$ by specifying what the local subgradients at nodes $u_i$ should  be at time $t$.  For this choice of subgradient to be valid, we had to choose $\gamma=3$ and also $a=3+\gamma = 6$. We next argue that, with these choices of $\gamma, a$ and $g_{u_i}(t)$, we can also choose a number to return as the local subgradient of all the nodes $v_i$ such that Eq. (\ref{eq:traj}) holds at time $t+1$ for the nodes $v_i$ as well.

Indeed, by induction we have that 
for $i \in \{v_1, \ldots, v_n\}$, 
\begin{align}
x_{v_i}(t+1) & = x_{v_i}(t) + \epsilon (0 - x_{v_i}(t)) - \frac{(1-\epsilon) g_{v_i}(t)+ \epsilon g_{u_i}(t)}{\sqrt{t}}, \label{eq:vrecur}
\end{align} where we used that $x_{u_i}(t)=0$ by the inductive hypothesis, and $x_{v_i}(t)=x_{v_j}(t)$ for all $i,j$, also by the inductive hypothesis.  We want to show that there is a choice of $g_{v_i}(t)$ that turns the left-hand side into $y(t+1)$. But the choice  $g_{v_i}(t) = (1/2) {\rm sign}(y(t)-1)$  is valid and turns the pair of Eq. (\ref{eq:vrecur} and Eq. (\ref{eq:subgrtimet})  into exactly Eq. (\ref{eq:yupdate}), so it certainly results in $x_{v_i}(t+1)=y(t+1)$. 

To summarize, we have shown how to choose valid subgradients at each step so that Eq. (\ref{eq:mainupdate1}) turns into the recursion relation satisfied by Eq. (\ref{eq:traj}). The proof is now complete. 
\end{proof}

It remains to prove Lemma \ref{lemm:recurr}. 

\bigskip

\begin{proof}[Proof of Lemma \ref{lemm:recurr}] We first argue that $y(t) \in [0,2]$ for all $t$. This will follow from the following three assertions. 

First: that   $y(t)$ decreases whenever it is above one.  Indeed, from Eq. (\ref{eq:yupdate}), we have that whenever $y(t) \geq 1$, 
\[ y(t+1) \leq (1-\epsilon) y(t) + \frac{\epsilon^2/2}{1-\epsilon} = y(t) - \epsilon \left( y(t) - \frac{\epsilon/2}{1-\epsilon} \right). \] Since we have assumed $\epsilon \in (0,1/4]$ and $y(t) \geq 1$, the expression in parenthesis is positive and $y(t+1) < y(t)$. 

Second: that $y(t)$ cannot decrease below zero. Indeed, if $y(t) \in [0,1)$, then from Eq. (\ref{eq:yupdate}), 
\begin{align*} y(t+1) & \geq (1-\epsilon) y(t) + \frac{(1-\epsilon)/2}{\sqrt{t}} - \frac{\epsilon^2 \sqrt{t} y(t)}{\sqrt{t} (1-\epsilon)} - \frac{\epsilon^2/2}{(1-\epsilon) \sqrt{t}} \\ & = \left( 1 - \epsilon - \frac{\epsilon^2}{1-\epsilon} \right) y(t) + \frac{(1-\epsilon)/2}{\sqrt{t}}- \frac{\epsilon^2/2}{(1-\epsilon) \sqrt{t}} \\
& \geq 0,
\end{align*} where the last step follows because $\epsilon \leq 1/4$ implies that 
\[ 1 - \epsilon - \frac{\epsilon^2}{1-\epsilon} \geq 0 \] and 
\[ \frac{1-\epsilon}{2} \geq \frac{\epsilon^2/2}{1-\epsilon}. \] 
On the other hand, if $y(t) \geq 1$, then
\begin{align*} y(t+1) & \geq (1-\epsilon) y(t) - \frac{(1-\epsilon)/2}{\sqrt{t}} - \frac{\epsilon^2 \sqrt{t} y(t)}{\sqrt{t}(1-\epsilon)} \\ 
& = \left( 1 - \epsilon - \frac{\epsilon^2}{1-\epsilon} \right) y(t)  - \frac{(1-\epsilon)/2}{\sqrt{t}}  \\ 
& \geq \frac{1}{2} y(t) - \frac{1/2}{\sqrt{t}} \\ 
& \geq \frac{1}{2} - \frac{1}{2} = 0,
\end{align*} where we used that $\epsilon \leq 1/4$ again. Thus, regardless of whether $y(t) \in [0,1)$ or $y(t) \geq 1$, we have that $y(t+1) \geq 0$. 

Third: that if  $y(t) \in [0,1)$, then the increase to the next step is  at most $1/(2\sqrt{t})$.   Indeed, for $y(t) \in [0,1)$ we have from Eq. (\ref{eq:yupdate}), 
\begin{align*} y(t+1) & \leq \left( 1 - \epsilon - \frac{\epsilon^2}{1-\epsilon} \right) y(t) + \frac{(1-\epsilon)/2}{\sqrt{t}} \\
& \leq y(t) + \frac{1}{2 \sqrt{t}}
\end{align*} 

Putting the three assertions above together, we obtain that $y(t) \in [0,2]$ for all $t$. 

\bigskip

We next prove that $y(t)$ decays as $O(\epsilon^{-1}/\sqrt{t})$. Indeed, since $|{\rm sign}(y(t)-1)| \leq 1$, we have that,
\[ y(t+1) \leq \left(1-\epsilon - \frac{\epsilon^2}{1-\epsilon} \right) y(t) + \frac{1}{2\sqrt{t}}.\] Defining $z(t)$ via  
\begin{equation} \label{eq:zdef} z(t+1) = (1-\epsilon) z(t) + \frac{1}{2\sqrt{t}},\end{equation} with $z(1)=0$, we have that $y(t) \leq z(t)$. To prove an upper bound on $y(t)$,  we simply need to establish the same upper bound for $z(t)$.

First, we  argue that 
\begin{equation} \label{eq:zbound}  z(t) \leq \sum_{k=1}^{t-1} \frac{1}{2 \sqrt{k}} = \frac{1}{2} + \sum_{k=2}^{t-1} \frac{1}{2 \sqrt{k}} \leq \frac{1}{2} +  \int_1^{t-1} \frac{1}{2 \sqrt{u}} ~du  \leq \sqrt{t} \end{equation} 
Next, we multiply both sides of Eq. (\ref{eq:zdef}) by  $\sqrt{t+1}$ to obtain 
\[ \sqrt{t+1} z(t+1) = (1-\epsilon) \sqrt{t+1} z(t) + \frac{1}{2} \sqrt{\frac{t+1}{t}},\] and now using concavity of square root we obtain 
\[ \sqrt{t+1} z(t+1) \leq (1-\epsilon) \left( \sqrt{t} + \frac{1}{2\sqrt{t}} \right) z(t) + \frac{1}{2} \sqrt{\frac{t+1}{t}}. \] Using Eq. (\ref{eq:zbound}), this implies that 
\[ \sqrt{t+1} z(t+1) \leq (1-\epsilon) \sqrt{t} z(t) + 2,\] which gives that $\sqrt{t} z(t) \leq 2/\epsilon$. This proves that  $y(t) \leq 2 \epsilon^{-1}/\sqrt{t}$ and concludes the proof of Eq. (\ref{eq:ysqrtbound}).

It only remains to prove Eq. (\ref{eq:concl}). Since, from Eq. (\ref{eq:ysqrtbound}) which we have just established,  we  know that $y(t) \rightarrow 0$, it follows that, for all $t$ larger than some $t_0$, ${\rm sign}(y(t)-1)=-1$; and therefore, for such $t$, 
\[ y(t+1) = \left(1-\epsilon - \frac{\epsilon^2}{1-\epsilon} \right) y(t) + \frac{(1-\epsilon)-\epsilon^2/(1-\epsilon)}{2\sqrt{t}}, \] 
which, because $\epsilon \in (0,1/4]$, implies that 
\[ y(t+1) \geq (1-2\epsilon) y(t) + \frac{1}{4 \sqrt{t}}.\] 
This means that for all $t \geq t_0+1$
\[ y(t) \geq \sum_{k=t_0}^{t-1} \frac{1}{4 \sqrt{k}} (1 - 2 \epsilon)^{t-(k+1)} \geq \frac{1}{4 \sqrt{t}} \sum_{k=t_0}^{t-1} (1-2 \epsilon)^{t-(k+1)}.
\] As $t \rightarrow \infty$, the geometric sum in the final term above approaches $1/(2 \epsilon)$. It follows that, when $t$ is bigger than some $t_1$, it is larger than half of that, so that 
\[ y(t) \geq \frac{\epsilon^{-1}}{16 \sqrt{t}}.\] This completes the proof. 
\end{proof}

Finally, we put Theorem \ref{thm:supp} into context by arguing that the inverse spectral gap of the matrix $W_{G_n',\epsilon}$  is $\Theta \left( \epsilon^{-1} \right)$ when $\epsilon$ is small enough. This explains that scaling with $\epsilon^{-1}$ is the same scaling with the inverse spectral gap of $W_{G_n',\epsilon}$. 

\begin{proposition} \label{prop:gap} Let $1=\lambda_1, \lambda_2, \ldots, \lambda_n$ be the eigenvalues of the matrix $W_{G_n',\epsilon}$ arranged in descending order (because $W_{G_n', \epsilon}$ is symmetric, its eigenvalues are real). Then 
\[ \lambda_2(G_n) = 1 - 2 \epsilon. \] 
\end{proposition}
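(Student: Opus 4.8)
The plan is to diagonalize $W_{G_n',\epsilon}$ explicitly, exploiting two features of the construction: every vertex of $G_n'$ has the same degree, and the graph has an obvious $u \leftrightarrow v$ symmetry. First I would observe that each vertex has degree exactly $n$ (within its own clique it has $n-1$ neighbors, plus the single cross-edge), so the off-diagonal entries in each row of $W_{G_n',\epsilon}$ sum to $n\epsilon$ and the common diagonal entry is $1 - n\epsilon$. Hence $W_{G_n',\epsilon} = (1-n\epsilon) I_{2n} + \epsilon A$, where $A$ is the adjacency matrix of $G_n'$. Since this is an affine function of $A$, the eigenvalues of $W_{G_n',\epsilon}$ are exactly $1 - n\epsilon + \epsilon \mu$ as $\mu$ ranges over the spectrum of $A$, and the problem reduces to computing $\mathrm{spec}(A)$.

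Next I would compute $\mathrm{spec}(A)$ using the product structure. Ordering the vertices as $u_1,\dots,u_n,v_1,\dots,v_n$, the adjacency matrix is $A = I_2 \otimes (J_n - I_n) + P \otimes I_n$, where $J_n$ is the all-ones matrix and $P = \bigl(\begin{smallmatrix} 0 & 1 \\ 1 & 0 \end{smallmatrix}\bigr)$ is the swap matrix encoding the cross-edges $u_i \sim v_i$. For any eigenvector $p$ of $P$ (eigenvalue $\rho \in \{+1,-1\}$) and any eigenvector $q$ of $J_n - I_n$ (eigenvalue $\nu$), the tensor $p \otimes q$ is an eigenvector of $A$ with eigenvalue $\rho + \nu$. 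Since $J_n - I_n$ has eigenvalue $n-1$ (simple, eigenvector $\mathbf{1}$) and $-1$ (multiplicity $n-1$), this yields the four eigenvalues $\{n,\, n-2,\, 0,\, -2\}$ of $A$ with multiplicities $1, 1, n-1, n-1$ respectively, accounting for all $2n$ of them.

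Translating back through $\mu \mapsto 1 - n\epsilon + \epsilon \mu$, the eigenvalues of $W_{G_n',\epsilon}$ are $1,\ 1-2\epsilon,\ 1-n\epsilon,\ 1-(n+2)\epsilon$. Finally I would order these: the all-ones eigenvector gives $\lambda_1 = 1$, and among the remaining three values $1-2\epsilon$ is the largest precisely when $n > 2$ (the comparison $1-2\epsilon > 1-n\epsilon$ is equivalent to $n > 2$, while $1-2\epsilon > 1-(n+2)\epsilon$ holds for all $n > 0$); here I would use $\epsilon \leq 1/n$ to keep $\epsilon > 0$, so the comparisons are strict. Since $n \geq 4$, this identifies $\lambda_2 = 1 - 2\epsilon$, as claimed.

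I do not expect a genuine obstacle: the whole argument is exact linear algebra once the regularity of the degree sequence is noted. The only step requiring care is the spectral computation of $A$, and the main choice is how to package the symmetry; the tensor decomposition above is cleanest, but the same spectrum can be obtained by splitting $\R^{2n}$ into the symmetric subspace $\{(x,x)\}$ (on which $A$ acts as $J_n$) and the antisymmetric subspace $\{(x,-x)\}$ (on which $A$ acts as $J_n - 2I_n$). I would also keep in mind that the proposition concerns the second-largest \emph{eigenvalue} $\lambda_2$, not the second-largest singular value $\sigma$ used elsewhere; relating the two would require comparing moduli of $1-n\epsilon$ and $1-(n+2)\epsilon$ against $1-2\epsilon$, but that comparison is not needed for the stated claim.
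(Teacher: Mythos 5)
Your argument is correct and follows essentially the same route as the paper's proof: both exploit the $u \leftrightarrow v$ symmetry to diagonalize $W_{G_n',\epsilon}$ explicitly on the symmetric and antisymmetric subspaces, arriving at the same spectrum $\{1,\ 1-2\epsilon,\ 1-n\epsilon,\ 1-(n+2)\epsilon\}$ with the same eigenvectors (the paper writes $W$ in terms of the complete-graph Laplacian rather than the adjacency matrix, but the two decompositions are algebraically identical). Your explicit ordering step, noting that $1-2\epsilon > 1-n\epsilon$ requires $n>2$, is a small point the paper leaves implicit, and is worth keeping.
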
 

\begin{proof} The main idea is that it is quite easy to diagonalize $W_{G_n',\epsilon}$ explicitly. Indeed, let $L_{K_n}$ be the Laplacian of the complete graph on $n$ vertices and let us adopt the notation $I_n$ for the $n \times n$ identity matrix. Then it is immediate that 
\[ W_{G_n, \epsilon} = I_{2n} 
- \epsilon
\left(
\begin{array}{cc} 
L_{K_n} & 0 \\ 
0 & L_{K_n} 
\end{array} \right) 
- \epsilon \left(
\begin{array}{cc} 
I_n & -I_n \\ 
-I_n & I_n 
\end{array} \right) \]
 Now it is well known how to diagonalize the Laplacian of the complete graph: $L_{K_n} {\bf 1} = 0, L_{K_n} x = n x$ for any vector $x$ orthogonal to ${\bf 1}$. It follows that the eigenvectors of $W_{G_n',\epsilon}$ can be written out as follows. Pick $n-1$ linearly independent vectors $x_1, \ldots, x_{n-1}$ whose entries sum to zero, and observe that the collection of vectors $\{ [x_i,x_i]^T, [x_i,-x_i]^T ~|~ i = 1, \ldots, n-1 \} \cup \{  [{\bf 1}, {\bf 1}]^T, [{\bf 1}, -{\bf 1}]^T\}$ is an orthogonal basis for $\R^{2n}$.

We next verify that all of these vectors are in fact eigenvectors of $W_{G_n,\epsilon}$. Indeed:
\[ W_{G_n',\epsilon} \left( \begin{array}{c} {\bf 1} \\ {\bf 1}
\end{array} \right)  =  \left( \begin{array}{c} {\bf 1} \\ {\bf 1} 
\end{array} \right), 
\]
\[ W_{G_n',\epsilon} \left( \begin{array}{c} {\bf 1} \\ -{\bf 1} 
\end{array} \right)  = (1-2\epsilon) \left( \begin{array}{c} {\bf 1} \\ -{\bf 1} 
\end{array} \right), 
\] and for any $x$ orthogonal to the all-ones vector, 
\[ W_{G_n',\epsilon} \left( \begin{array}{c} x \\ x 
\end{array} \right)  = (1-n\epsilon) \left( \begin{array}{c} x \\ x 
\end{array} \right), 
\] while 
\[ W_{G_n',\epsilon} \left( \begin{array}{c} x \\ -x 
\end{array} \right)  =  (1-n\epsilon -2 \epsilon) \left( \begin{array}{c} x \\ -x 
\end{array} \right). 
\] Thus the set of eigenvalues of $W_{G_n',\epsilon}$ is $\{1, 1-2\epsilon, 1-n\epsilon, 1-(n+2)\epsilon\}$. This completes the proof. \end{proof}

\section{What does network independence look like? Simulating the counterexample. \label{sec:simul}}  We next give a numerical illustration of what network independence looks like. Specifically, we simulate the example constructed in the proof of Theorem \ref{thm:supp}. Our main purpose in doing so is to show how network independence, and its lack, are very stark phenomena that can be instantly ``read off'' the simulation results. 

First, we found that our choice of $\gamma$ and $a$ in the course of the proof were somewhat conservative; numerically, we find that we can choose the slightly smaller values  $\gamma = 2$ and $a=5$. We simulated the step-size of $1/t^{\beta}$ for two choices of $\beta$. Specifically, the  step-size choice of $\beta=1/2$ is shown in Figure \ref{fig:sub1} while the choice of $\beta=3/4$ is shown in Figure \ref{fig:sub2}.  Each simulation shows three different values of $n$.

It is crucial to note that {\em the y-axis of both figures shows $t^{1-\beta} (F(\overline{x}(t) - F^*)$.} Intuitively, choosing a step-size of $\alpha(t) = 1/t^{\beta}$ will result in error $F(x(t)) - F^*$ that decays like $1/t^{1-\beta}$. What we are interested is whether the constant in front of this depends on the network, so we rescale the error to make this clear.

Comparing Figure \ref{fig:sub1} and Figure \ref{fig:sub2} illustrates our main result. In Figure \ref{fig:sub1}, we have argued in Theorem \ref{thm:supp} that network independence does not occur. In particular, the underlying network has spectral gap that grows linearly with $n$ (see Proposition \ref{prop:gap}) and, as a result, the performance of the method ultimately behaves like $\sim n/\sqrt{t}$. Thus the effect of $n$ is never  forgotten.

On the other hand, in Figure \ref{fig:sub2}, we see a peak whose height/length may depend on the spectral gap, and on $n$, in some fashion; but eventually, every curve drops below $1$. In other words, we have that, eventually,  $t^{1-\beta} (F(\overline{x}(t) - F^*) \leq 1$. We see that, after a transient periodi, the performance  satisfies an $O(t^{-(1-\beta)})$ decay bound that does not depend on $n$ or the spectral gap. Of course, here one must include the usual caveat that the size of the transient until this happens will depend on the spectral gap, and thus on $n$ in graph families where the spectral gap depends on $n$.

\begin{center}
\begin{figure}
\centering
\begin{subfigure}{2.4in}
  \centering
  \includegraphics[width=2.25in]{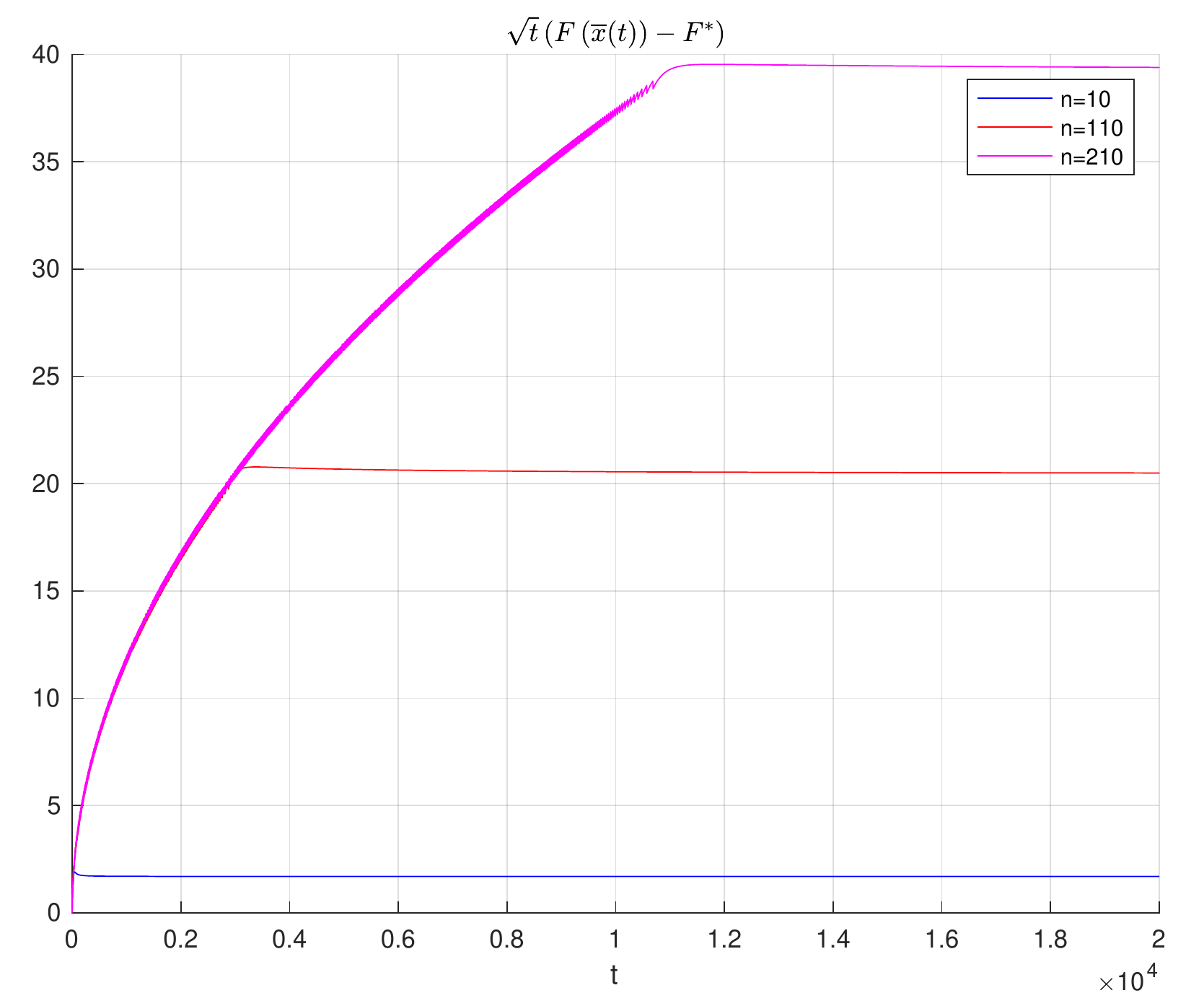}
  \caption{Step-size $\alpha(t)=1/\sqrt{t}$.}
  \label{fig:sub1}
\end{subfigure}%
\begin{subfigure}{2.4in}
  \centering
  \includegraphics[width=2.25in]{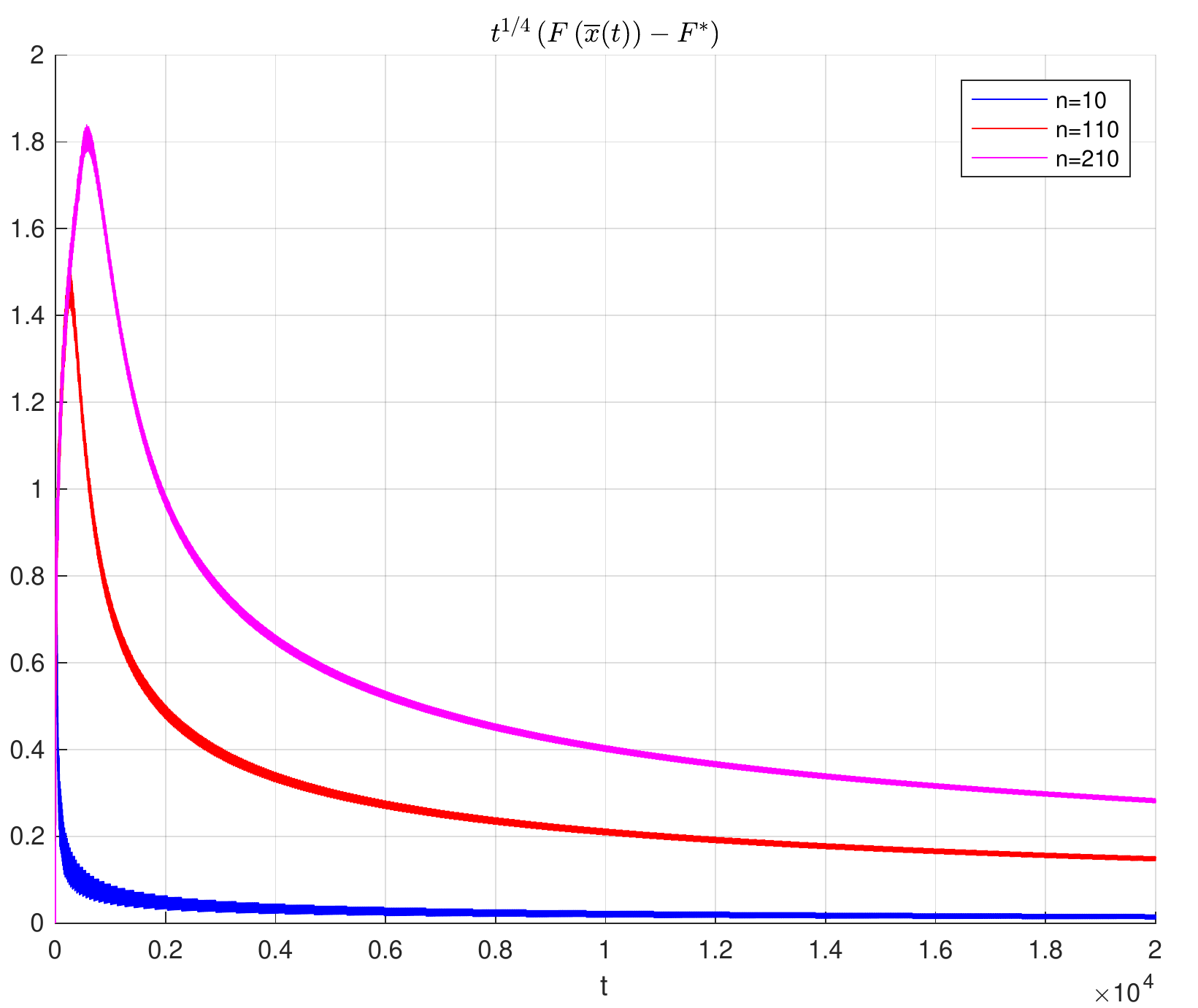}
  \caption{Step-size $\alpha(t)=1/t^{3/4}$}
  \label{fig:sub2}
\end{subfigure}
\caption{Simulation of the counter-example constructed in the proof of Theorem \ref{thm:supp} for two different choices of step-size. The figure on the right is consistent with asymptotic network independence, while the figure on the left is consistent with its absence.}
\label{fig:test}
\end{figure}
\end{center}

\section{Step-size inversion\label{sec:inversion}}

The contrast between Theorems \ref{thm:main} and \ref{thm:supp} raises an interesting question: could it help in practice to choose a more slowly decaying step-size in distributed optimization? 

Asymptotically, the answer is clearly no. Choosing a step-size that decays like $1/t^{\beta}, \beta \in [1/2,1)$ results in an error $F(\cdot) - F^*$ that decays like $1/t^{1-\beta}$, so a lower $\beta$ is  better if we wait long enough. But this is an asymptotic statement, and it says nothing about what happens in practice when we count iterations until we are close to the optimal solution. 

Informally, we may think of the distributed subgradient method as having two sources of error: the first kind, which is error that comes from the asymptotic convergence rate; and the second kind, which is the error that comes purely from the network disagreement effects. Theorems \ref{thm:main} and Theorem \ref{thm:supp} suggest the second source of error decays faster for larger $\beta$ (this is why the transient size given in Theorem \ref{thm:main} decreses with larger $\beta$). Even though asymptotically, the first source of error dominates, if we only count iterations until we get to a certain neighborhood of  the optimal solution, it might be that the second kind of error dominates in such a ``non-asymptotic'' experiment. 

We next give evidence that this is sometimes the case. Specifically, we describe a class of problems with random data, and simulate the centralized subgradient method; as expected,  performance (measured as number of iterations until the gradient mapping $s(t)$ is small) gets worse as we increase $\beta$ in step-size $1/t^{\beta}$. We then simulate the distributed subgradient method and observe a more complicated relationship between step-size and performance; in particular, performance could improve as we increase $\beta$ over some range of values. We call this phenomenon step-size inversion (since the effect of increasing $\beta$ could be opposite for centralized and distributed methods).

\begin{figure}
\centering
\begin{subfigure}{2.7in}
  \centering
  \includegraphics[width=2.5in]{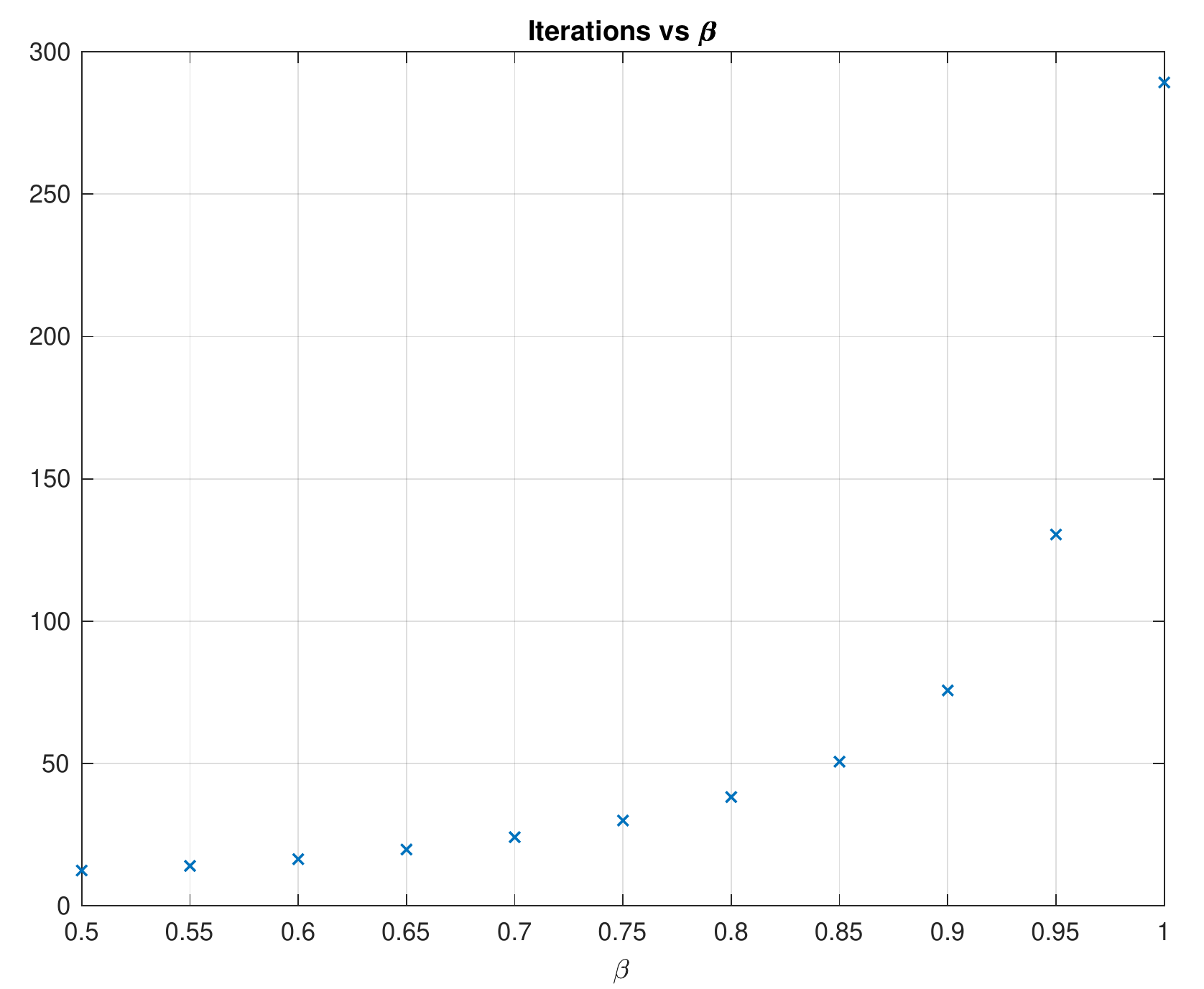}
  \caption{Centralized subgradient method}
  \label{fig2:sub1}
\end{subfigure}%
\begin{subfigure}{2.7in}
  \centering
  \includegraphics[width=2.5in]{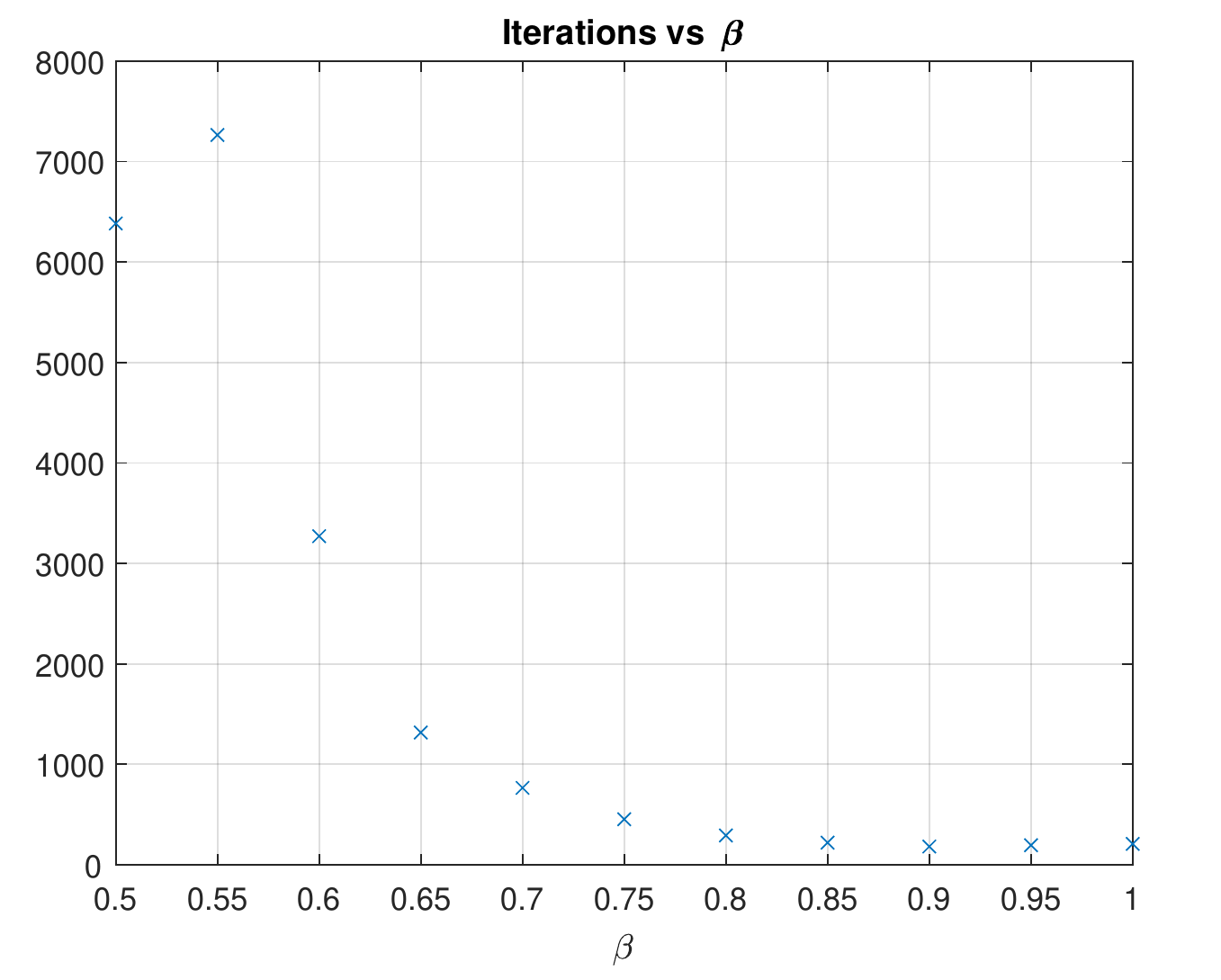}
  \caption{Distributed subgradient method}
  \label{fig2:sub2}
\end{subfigure}
\caption{The relationship between the step-size $1/t^{\beta}$ and the number of iterations needed until the gradient mapping $s(t)$ is small.}
\label{fig2}
\end{figure}

Our starting point is that we want to consider a sufficiently simple class of problems to which it makes sense to apply the subgradient method. Thus, the problem should be non-smooth. On the other hand, it should not be something so simple as a quadratic with an $||\cdot||_1$ regularizer, as that can be solved via proximal methods. A natural choice is to consider problems with an ``elastic net'' regularization \cite{zou2005regularization}, which a sum $||\cdot||_1$ and $||\cdot||_2$ regularizers. For our objective, we consider a regression-like problem where squares are replaces by fourth-powers, so that the ultimate objective is 
\[ F(\theta) = \sum_{i=1}^K ||a_i^T \theta - b_i||_2^4 + \lambda_1 ||\theta||_2 + \lambda_2 ||\theta||_1. \] The logic of using fourth powers, or any exponent higher than $2$, is that the resulting regression problem is very sensitive to large deviations -- at the cost of being less sensitive to small errors. It also makes the problem more difficult for the subgradient method (and if the exponents were quadratic, there would actually be no good reason to solve this problem with a subgradient method in the first place). 

We generate the matrix $A$ which stacks up the vectors $a_i$ to be Gaussian with unit variance; each $a_i$ belongs to $\R^2$. We set $b_i = a_i^T {\bf 1} + w_i$, where $w_i$ is a Gaussian random variable of small variance $1/25$. We choose $\lambda_1=1, \lambda_2=1/20$.

The results are shown in Figure \ref{fig2}. We use $n=10$ agents on the line graph. The two figures plot $\beta$ vs the number of iterations.  Each data point is an average over 10,000 runs; we terminate when  $||s(t)||_1 < 0.03$. A slight difficulty comes from the fact that we have considerable freedom to choose the subgradient of $||\cdot||_1$ at the origin and, somewhat problematically, round-off error will preclude any component of $s(t)$ from being zero exactly. A reasonable fix is compute $||s(t)||_1$ using the best possible choice of subgradient of $||\cdot||_1$, and to treat entries of $s(t)$ in a small enough interval around zero as indistinguishable from zero. 

Note that, for the distributed method, the number of iterations we are plotting in Figure \ref{fig2} is simply the first $t$ in Eq. (\ref{eq:no}) until the termination condition $||s(t)||_1 < 0.03$ is met. For the centralized method, each iteration consists of computing the gradient of the full function $F(t)$. In other words, a single ``iteration'' of the centralized method has the same cost in terms of the number of gradient computations as a single iteration of the distributed ``method'' (both involve computing all $n$ subgradients of the functions $f_1(\cdot), \ldots f_n(\cdot)$). Of course, such a definition does not fully capture the benefits of the distributed method, which computes the gradients in parallel. An alternative definition is possible, where an iteration of the centralized method might be defined as computing the subgradient of a single function $f_i(\cdot)$ with a subgradient step taken once all $n$ of the subgradients have been computed;  this would result in multiplying the y-axis of Figure \ref{fig2:sub1}, which described the performance of the centralized subgradient method, by $n=10$.

Glancing at Figure \ref{fig2:sub1}, we see that the performance of the centralized subgradient method is as expected: higher $\beta$ leads to slower performance. On the other hand, the behavior of the distributed method in Figure \ref{fig2:sub2} is more complicated, with running times that are largest slighty above $1/2$ and decaying thereafter. We do see that it may be possible for $\beta_1$ to be better than $\beta_2$ in the centralized case, only to have the performance flip in the distributed case, which is the ``step-size'' inversion from the title of this section. 

We end this discussion with two caveats. First, this behavior is not true for all practical examples; our point is that step-size inversion {\em can} happen in measures of average-case performance, which is something that is hinted by our results but has not been observed before to our knowledge. In our simulations, step-size inversion  seems to be tied to the oscillations of the subgradient when the optimal solution occurs at a point of non-smoothness (i.e., when some component of $\theta^*$ equals zero in the above example). This is how the parameter values described above were chosen. We wanted to choose values that resulted in occasional solutions with components close to zero, but we didn't want this to happen too often (as then the simulation would take too long, as these are exactly the cases when it takes a very long time converge), which is why we chose $\beta=0.1$. Similarly, the choice of $0.03$ for the $1$-norm of $s(t)$ in the termination condition was picked to achieve a reasonable running time (which is less than overnight on MATLAB running on an iMac; initially, we tried to set that threshold to $0.01$, but the resulting simulations took too long -- recall that each data point in Figure \ref{fig2} is the average over 10,000 runs). 

\section{Conclusion\label{sec:concl}} 

Our goal was to understand when one can obtain network independence and a linear speedup in the distributed subgradient method when compared to its centralized counterpart. We showed that this is possible when the step-size decays like $1/t^{\beta}$ when $\beta > 1/2$, but not when $\beta=1/2$. The bounds we derived on the transient time until the linear speedup kicks in increased as $\beta$ decreased, suggesting that it might be the case that faster-decaying step-sizes are better in the distributed case, even when the opposite is true in the centralized case. We simulated one class of problems with random data where this was indeed the case for a range of step-sizes. 

Our results point to a number of open questions. First, we have no theory to explain the performance we see in Figure \ref{fig2}. The shape of that figure is particularly interesting, as increasing $\beta$ starting from $\beta=1/2$ seems to make things worse before making them better. Second, it would be interesting to conduct a large scale computational study across many problems of interest to see if the step-size inversion phenomenon holds more broadly than was demonstrated here. Since this is a primarily theoretical paper, it is  out of scope for the present work. 

Most importantly, it would be interesting to examine whether a linear speedup can be obtained for the optimal step-size decay $\beta=1/2$ using a different algorithm (in the standard model of distributed optimization where a single message exchange and a single subgradient computation are possible at each node at each step). We are not aware of any lower bounds ruling this out, nor any algorithms  known to achieve this. 
\vskip 0.2in

\bibliographystyle{unsrt}
\bibliography{refs}

\end{document}